\tikzstyle{vertex}=[circle, draw, inner sep=3pt, minimum size=6pt]
\definecolor{red}{rgb}{1,0,0}
\definecolor{blue}{rgb}{0,0,1}
\definecolor{green}{rgb}{0,.6,0}
\newtheorem{thm}{Theorem}[section]
\newtheorem{cor}[thm]{Corollary}
\newtheorem{lem}[thm]{Lemma}
\newtheorem{prop}[thm]{Proposition}
\newtheorem{prob}[thm]{Problem}
\newtheorem{quest}[thm]{Question}
\theoremstyle{definition}
\theoremstyle{definition}
\theoremstyle{definition}
\theoremstyle{definition}
\theoremstyle{definition}
\newcommand{\binf}{B^{[\infty]}}
\newcommand{\Z}{\operatorname{Z}}
\newcommand{\bit}{\begin{itemize}}
\newcommand{\eit}{\end{itemize}}
\newcommand{\ben}{\begin{enumerate}}
\newcommand{\een}{\end{enumerate}}
\newcommand{\beq}{\begin{equation}}
\newcommand{\eeq}{\end{equation}}
\newcommand{\bea}{\begin{eqnarray}} 
\newcommand{\eea}{\end{eqnarray}}
\newcommand{\bpf}{\begin{proof}}
\newcommand{\epf}{\end{proof}\ms}
\newcommand{\bmt}{\begin{bmatrix}}
\newcommand{\emt}{\end{bmatrix}}
\newcommand{\ms}{\medskip}
\newcommand{\noi}{\noindent}
\newcommand{\beqs}{\begin{equation*}} 
\newcommand{\eeqs}{\end{equation*}}
\newcommand{\beas}{\begin{eqnarray*}}
\newcommand{\eeas}{\end{eqnarray*}}
\newcommand{\sub}[1]{_{(#1)}}
\newcommand{\calf}{\mathcal{F}}
\newcommand{\fs}{\rightarrow}
\title{On leaky forcing and resilience}
\author{Joseph S. Alameda\thanks{Dept.~of Mathematics, Iowa State University, Ames, IA, USA (jalameda@iastate.edu)} \and 
J\"urgen Kritschgau\thanks{Dept.~of Mathematics, Iowa State University, Ames, IA, USA (jkritsch@iastate.edu) Research is supported by NSF grant DMS-1839918} \and Nathan Warnberg\thanks{Dept.~of Mathematics and Statistics, University of Wisconsin-La Crosse, La Crosse WI, USA}
\and  Michael Young\thanks{
 Dept.~of Mathematics, Iowa State University, Ames, IA 50011, USA (myoung@iastate.edu) Research is supported by NSF Grant DMS-1719841.} }
\begin{document}

\maketitle

\begin{abstract}
    A leak is a vertex that is not allowed to perform a force during the zero forcing process.
    Leaky forcing was recently introduced as a new variation of zero forcing in order to analyze how leaks in a network disrupt the zero forcing process. The $\ell$-leaky forcing number of a graph is the size of the smallest zero forcing set that can force a graph despite $\ell$ leaks. A graph $G$ is $\ell$-resilient if its zero forcing number is the same as its $\ell$-leaky forcing number. In this paper, we analyze $\ell$-leaky forcing and show that if an $(\ell-1)$-leaky forcing set $B$ is robust enough, then $B$ is an $\ell$-leaky forcing set. This provides the framework for characterizing $\ell$-leaky forcing sets. Furthermore, we consider structural implications of $\ell$-resilient graphs. We apply these results to bound the $\ell$-leaky forcing number of several graph families including trees, supertriangles, and  grid graphs. In particular, we resolve a question posed by Dillman and Kenter concerning the upper bound on the $1$-leaky forcing number of grid graphs.
\end{abstract}

\noi {\bf Keywords} zero forcing, leaky forcing, color change rule

\noi{\bf AMS subject classification} 05C57, 05C15, 05C50
\section{Introduction}

Zero forcing is a process by which blue vertices propagate through a simple graph.\footnote{We use standard graph theoretic notation as introduced in \cite{W}.} More formally, we start with an initial set of blue vertices in a graph that color (or force) other vertices blue. A blue vertex $v$ can color (or force) a white vertex $w$ blue if $w$ is the only white neighbor of $v$. This is called \emph{ the zero forcing color change rule.} A set of vertices $B$ is a \emph{zero forcing set of $G$}, if $B$ can force every vertex in $G$ by iteratively applying the zero forcing color change rule.  The fewest blue vertices needed to turn the entire graph blue is called the \emph{zero forcing number} of the graph. Zero forcing was introduced in \cite{AIM} to find upper bounds for the maximum nullity for the family of real symmetric matrices whose nonzero off-diagonal entries are described by a graph.  However, the connections go beyond linear algebra and graph theory because, more generally, zero forcing is a process that models knowledge or control of systems.  In physics, the zero forcing process was used to optimally control quantum systems  \cite{BG,S}. In \cite{BDHSY}, it was shown that if a set of vertices is a zero forcing set, then the associated dynamical system is controllable. In computer science, fast search and mixed search methods are combined to create a fast-mixed search algorithm that is directly related to the zero forcing number  \cite{FMY}.  Being able to turn every vertex blue in a graph also models controlling the phase of electricity through an electrical network. In \cite{HHHH}, Haynes et al. researched the problem of monitoring an electric power system by placing as few measurement devices as possible and the explicit connection with zero forcing was later made in \cite{DIRST}.  For a more robust literature review see \cite{FHI} and \cite{FHII}.
 
In light of these applications, there is concern about how a faulty vertex in a network disrupts the flow of information or the ability to control a network.  A more recent variation on zero forcing was introduced to address these concerns in \cite{DK} by Dillman and Kenter called leaky forcing. In this new variation, the following question was explored. What if there is a leak in a system which prevents the zero forcing process from finishing? This paper will further explore this question, and also explore what properties make a network resistant to leaks.

Leaky forcing follows the same color change rule as zero forcing. A \emph{leak} in a graph $G$ is a vertex that is not allowed to perform a force. In particular, if a blue vertex $v$ is a leak, and it is adjacent to exactly one white vertex, then $v$ cannot force the white vertex blue. An {\it $\ell$-leaky forcing set} for a graph $G$ is a subset of initial blue vertices $B$ such that if  any $\ell$ vertices are chosen to be leaks (after $B$ has been specified), then iteratively applying the color change rule will force every vertex in $G$.  The {\it $\ell$-leaky forcing number} of $G$ is the size of a minimum $\ell$-leaky forcing set and is denoted $\Z\sub{\ell}(G).$ Notice that a $0$-leaky forcing set for a graph $G$ is also a zero forcing set, and the $0$-leaky forcing number is also the zero forcing number. This notation differs from the notation introduced by  Dillman and Kenter in \cite{DK}. We use the  term $\ell$-leaky forcing instead of the abbreviated $\ell$-forcing. This change is made in order to prevent confusion between leaky forcing and another generalization of zero forcing called $k$-forcing.

Intuitively, the more leaks there are in a graph, the larger the initial blue set must be. This is captured in the following proposition.

\begin{prop}\label{ineq}\cite{DK}
For any graph $G$, $$\Z\sub0(G)\leq \Z\sub1(G) \leq \dots \leq \Z\sub{n}(G).$$
\end{prop}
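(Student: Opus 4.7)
The plan is to reduce the full chain to a single inequality and prove that inequality by a monotonicity-in-the-leak-set argument. Specifically, it suffices to show that for every $\ell \in \{1,\dots,n\}$, any $\ell$-leaky forcing set is also an $(\ell-1)$-leaky forcing set; taking minimum sizes then yields $\Z_{\ell-1}(G) \leq \Z_\ell(G)$, and chaining these inequalities gives the proposition.

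To carry out the key step, I would fix an $\ell$-leaky forcing set $B$ and an arbitrary set $L \subseteq V(G)$ with $|L| = \ell - 1$. Since $\ell \leq n$, there is at least one vertex $v \in V(G) \setminus L$, so I can form $L' = L \cup \{v\}$ with $|L'| = \ell$. By hypothesis, $B$ admits a valid sequence of forces that turns all of $G$ blue when $L'$ is the leak set. Each force in that sequence is performed by some vertex not in $L'$, and such a vertex is also not in $L$. Therefore the identical sequence of forces is valid with leak set $L$, so $B$ forces $G$ despite the leaks in $L$. Since $L$ was arbitrary, $B$ is an $(\ell-1)$-leaky forcing set.

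There is essentially no obstacle here: the crucial (and only) observation is that enlarging the leak set can only forbid forces, never enable them, so any forcing sequence that survives $\ell$ leaks automatically survives any subset of $\ell-1$ of them. The only minor bookkeeping is confirming a vertex exists to extend $L$ to $L'$, which is guaranteed by the bound $\ell \leq n$ appearing in the statement of the proposition.
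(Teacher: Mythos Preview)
Your argument is correct: the monotonicity observation that any forcing sequence valid against a leak set $L'$ remains valid against any subset $L\subseteq L'$ is exactly what is needed, and your bookkeeping about extending $L$ to $L'$ is fine. Note, however, that the paper does not supply its own proof of this proposition; it is cited directly from \cite{DK}, so there is no in-paper argument to compare yours against.
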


A fundamental problem concerning $\ell$-leaky forcing is determining when the inequalities in Proposition \ref{ineq} are actually equalities (or strict inequalities).  To this end, a  graph $G$ is said to be {\it $\ell$-resilient} if $\Z\sub 0(G)=\Z\sub{\ell}(G)$. 

Section \ref{theory} presents results relating $(\ell-1)$-leaky forcing sets  to $\ell$-leaky forcing sets for general graphs. Theory is then developed for some general bounds on $\ell$-leaky forcing numbers.  Section \ref{structure} explores which structural properties a graph $G$ must have for $\Z\sub0(G)$ to equal $\Z\sub{\ell}(G)$, and studies how vertex or edge deletion affect the $\ell$-leaky forcing number. The $\ell$-leaky forcing number for trees, super triangles, and new bounds for the $\ell$-leaky forcing number for grid graphs are given in Section \ref{families}.

\section{Characterization of $\ell$-leaky forcing sets}
\label{theory}

In general, the intuition behind $\ell$-leaky forcing sets is that the set of initial blue vertices is robust. If $\ell$ vertices in the graph cannot perform  a force, then the rest of the blue vertices can take over those forcing   responsibilities.
To help formalize this intuition, we will define a few concepts that let us analyze the zero forcing process. 
In general, let $B\subseteq V(G)$ be an initial set of blue vertices in $G$. 
If vertex $u$ colors $v$ blue, then we say that $u$ forces $v$ and denote it by $u\fs v$. 
The symbol $u\fs v$ is called a force. 
A \emph{set of forces $F$ of $B$ in $G$} is a set of forces such that there is a chronological ordering of the forces in $F$ where each force is valid and the whole graph turns blue. 
Intuitively, $F$ represents the instructions for how $B$ can force $G$ blue, or provides a proof that $B$ is a zero forcing set. 
Implicitly, $F$ gives rise to discrete time steps in which sets of white vertices turn blue. Given an initial blue set $B$ and forcing process $F$, we say that $B\subseteq B'\subseteq V(G)$ is \emph{obtained from $B$ using $F$} if $B$ can color $B'$ blue using only a subset of $F$.

The set $B^{[\infty]}$ is the set of blue vertices after the zero forcing rule has been exhaustively applied with $B$ as an initial blue set. Furthermore, $\binf_L$ will be determined after a set of leaks $L$ has been chosen. 
A sequence of forces $x_1\fs x_2, x_2\fs x_3,\dots, x_{k-1}\fs x_k$ is a \emph{forcing chain} and will be abbreviated by $x_1\fs x_2\fs x_3\fs\cdots\fs x_k$. A forcing chain is 
\emph{maximal} in $F$ (and implicitly given $B$) if  $x_1$ is in $B$ and $x_k$ does not perform a force in $F$. Note that $x_1=x_k$ is possible if $x_1\in B$ and $x_1$ does not perform a force in $F$.
Let $\calf (B)$ denote the set of all possible forces given a vertex set $B$. That is $u\fs v\in \calf(B)$ if there exists a set of forces $F$ of $B$ in $G$ that contains $u\fs v$.  Given this notation, $B$ is an $\ell$-leaky forcing set if for every $L\subseteq V(G)$ with $|L|=\ell$ there exists a forcing process $F$ such that if $u\fs v$, then $u\notin L$.  This idea can be used to relate an $\ell$-leaky forcing set to its set of possible forces $\calf (B)$.

\begin{lem}\label{lplus}
Let $G$ be a graph. If $B$ is an  $\ell$-leaky forcing set, then for all $v\in V(G)\setminus B$, there exists $x_1\fs v,x_2\fs v,\dots, x_{\ell+1}\fs v\in \calf(B)$ with $x_i\neq x_j$, $i\neq j$.
\end{lem}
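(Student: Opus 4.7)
The plan is to argue by contradiction: suppose $B$ is an $\ell$-leaky forcing set but some $v \in V(G) \setminus B$ has at most $\ell$ distinct vertices $x$ with $x \fs v \in \calf(B)$. Set $X = \{x \in V(G) : x \fs v \in \calf(B)\}$, so by assumption $|X| \leq \ell$. The idea is to make all of $X$ into leaks and derive a contradiction with the assumption that $B$ is $\ell$-leaky forcing.

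Concretely, I would choose a set of leaks $L$ with $|L| = \ell$ and $X \subseteq L$ (padding $X$ with arbitrary vertices if $|X| < \ell$). Since $B$ is $\ell$-leaky forcing, there is a set of forces $F$ that starts from $B$, respects the leak set $L$ (meaning no force in $F$ is performed by a vertex of $L$), and turns all of $V(G)$ blue. Because $v \notin B$, some force $y \fs v$ lies in $F$, and by the leak condition $y \notin L$, hence $y \notin X$.

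The key step—and the only one requiring a small observation—is to promote $y \fs v$ from a force in the leaky process $F$ to a force in $\calf(B)$. The point is that $F$, viewed simply as a set of forces with its chronological ordering, is already a valid set of forces of $B$ in $G$ in the sense of the paper: every individual force in $F$ is legal under the zero forcing color change rule (the leak restriction only removes potential forces, never creates illegal ones), and $F$ turns the graph entirely blue. Thus $y \fs v \in F \subseteq \calf(B)$, which forces $y \in X$ and contradicts $y \notin X$.

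I don't expect a real obstacle here; the content of the proof is essentially packaging the definitions. The one thing to be careful about is not conflating the two notions of forcing set (with and without leaks): specifically, one must verify that the ordered set of forces used by $B$ when avoiding the leaks in $L$ is still an admissible (non-leaky) forcing set, so that each individual force appearing in it belongs to $\calf(B)$.
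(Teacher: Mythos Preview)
Your proposal is correct and follows essentially the same approach as the paper: both argue (by contradiction/contrapositive) that if at most $\ell$ vertices can force $v$, then declaring those vertices to be leaks prevents $v$ from ever being colored, contradicting that $B$ is an $\ell$-leaky forcing set. You are slightly more explicit than the paper in verifying that a leak-avoiding forcing process is in particular an ordinary forcing process of $B$ (so that $y\fs v\in\calf(B)$), and in padding $X$ to have exactly $\ell$ leaks, but these are minor elaborations of the same argument.
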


\begin{proof}
Suppose that there exists $v\in V(G)\setminus B$ such that $v$ can be forced by at most $x_1,\dots, x_k$ distinct vertices where $k\leq \ell$. Let $L=\{x_1,\dots, x_k\}$. By construction, there does not exist a forcing process $F$ of $B$ that avoids $L$. In particular, if $F$ is a forcing process of $B$ with $x\fs v\in F$, then $x\in L$. Therefore, $B$ is not an $\ell$-leaky forcing set. 
\end{proof}

Lemma \ref{lplus} formalizes the intuition that an $\ell$-leaky forcing set is robust enough to force every vertex despite $\ell$ vertices that cannot perform forces. In fact, the set of possible forces can be used to characterize $1$-leaky forcing sets. Theorem \ref{double} captures the notion that a zero forcing set $B$ can force any white vertex in two ways if and only if $B$ is a $1$-leaky forcing set. The strategy for proving Theorem \ref{double} is to construct a forcing process that  avoids using particular vertices. This is the subject of  Lemma \ref{switch}.

Let $B$ be a fixed blue set with forcing processes $F$ and $F'$. The idea  is to use forcing process $F$ to obtain $B'$ from $B$, and then continue forcing with process $F'$. To formalize this idea, suppose $S\subseteq V(G)$ and let \[F(S)=\{ x\fs y\in F: y \notin S\}.\] By extension, \[F\setminus F(S)=\{ x\fs y\in F: y\in S\}.\] The following lemma proves that abandoning process $F$ to follow process $F'$ creates a new forcing process. 

\begin{lem}\label{switch}
Let $B$ be a zero forcing set in $G$ with zero forcing processes $F$ and $F'$. Then $(F\setminus F(B'))\cup F'(B')$ is a forcing process of $B$ for any $B'$ obtained from $B$ using $F.$
\end{lem}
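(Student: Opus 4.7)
The plan is to explicitly construct a chronological ordering for $(F\setminus F(B'))\cup F'(B')$ in two phases and verify that each force is valid when it is applied, and that every vertex ends up blue.

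In the first phase, I would apply exactly the forces in $F\setminus F(B') = \{x\fs y\in F : y\in B'\}$, following the chronological order inherited from $F$. Since $B'$ is obtained from $B$ using $F$, by definition $B$ can color $B'$ blue using a subset of $F$; by an easy induction on the $F$-order, this subset must be precisely the forces in $F$ whose target lies in $B'$, so Phase 1 is a valid sequence of forces starting from $B$ and ending with blue set exactly $B'$.

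In the second phase I would apply the forces of $F'(B') = \{x\fs y\in F' : y\notin B'\}$ in the order induced by $F'$. The crux is to show each such force is valid when it is applied. Fix a force $x\fs y\in F'(B')$ and let $T$ be the blue set immediately before $x\fs y$ is performed in $F'$; since $F'$ is a valid process, $x\in T$ and every neighbor of $x$ other than $y$ lies in $T$. Let $S'$ be the current blue set in our combined process just before we apply $x\fs y$. I claim $T\subseteq S'$: every vertex of $T$ is either in $B\subseteq B'\subseteq S'$, or was forced earlier in $F'$; if such a vertex is in $B'$, then it is in $S'$ after Phase 1, and if it is outside $B'$, then the force that produced it lies in $F'(B')$ and occurred earlier in the $F'$-order, so it was already applied in Phase 2. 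Thus $x\in S'$ and all neighbors of $x$ other than $y$ are in $S'$. It remains to check that $y$ is still white. Since $y\notin B'$, $y$ is white after Phase 1; and because each vertex is the target of at most one force in the process $F'$, $y$ is not the target of any force of $F'(B')$ applied earlier in Phase 2. Hence $y\in V(G)\setminus S'$, and the force $x\fs y$ is legal.

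Finally, to see that the combined process colors all of $V(G)$, note that any vertex $v\in V(G)\setminus B$ is the target of exactly one force in $F'$; if $v\in B'$ it becomes blue already in Phase 1, and if $v\notin B'$ its unique $F'$-force lies in $F'(B')$ and is executed in Phase 2. The main subtlety is keeping the bookkeeping of the two chronologies straight so that when we argue a force of $F'$ is valid at the moment it is used in Phase 2, the required blue neighbors are already present and the target has not been accidentally blued by Phase 1 or by an earlier Phase 2 force — both handled by the subset relation $T\subseteq S'$ and by uniqueness of targets within the single process $F'$.
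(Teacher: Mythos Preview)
Your proposal is correct and follows essentially the same approach as the paper. Both arguments hinge on processing the forces of $F'(B')$ in the chronological order induced by $F'$ and observing that, since every vertex blue in the $F'$-process up to a given moment is already blue in the combined process (either via Phase~1 or an earlier Phase~2 force), each force of $F'(B')$ remains valid; the paper packages this as a minimal-counterexample contradiction while you give the direct verification, and you are a bit more explicit than the paper about why Phase~1 (the forces in $F\setminus F(B')$) is itself a valid sequence reaching exactly $B'$.
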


\begin{proof}
Let $B'$ be some set of blue vertices obtained from $B$ using $F.$ Let $v_1<v_2<\cdots<v_n$ be an ordering of the vertices of $G$ so that whenever $v_i$ is blue for all $i<k$, then there exists $j<k$ such that $v_j\fs v_k\in F'$ which is valid. Since $F'$ is a forcing process, such an order can be found by simply keeping track of the order in which $F'$ turns vertices in $G$ blue.

For the sake of contradiction, suppose that $F'(B')$ cannot force $G$ blue given initial set $B'$. In particular, exhaustively apply forces in $F'(B')$ to $B'$ in order to  obtain $B^*\neq V(G)$. Let $k$ be the smallest index in such that $v_k$ is not in $B^*$. This implies that $v_i\in B'$ for all $i<k$. By construction of the ordering of the vertices of $G$, there exist a $j<k$ such that $v_j\fs v_k\in F'$ that can be performed if $B^*$ is blue. Since $v_k\notin B^*$, we know that $v_k\notin B'$. Therefore, $v_j\fs v_k \in F'(B')$; which contradicts the fact that all the forces in $F'(B')$ have been exhaustively applied.
\end{proof}

In essence, Lemma \ref{switch} guarantees that two forcing processes for a set $B$ can be combined. However, another interpretation is that we can switch to a new process at any point of an old process. In this way, sets of forces function as instructions that can be exchanged at any time. In the proof of Theorem \ref{double}, we use Lemma \ref{switch} to  construct a forcing process that is not obstructed by an arbitrarily fixed leak. However, the proof can also be summed up in the following diagram:

\[ B\overset{F}{\longrightarrow} B' \overset{F'}{\longrightarrow} V(G).\]

The diagram is intended to be read as: obtain $B'$ from $B$ using forcing process $F$, then obtain $V(G)$ from $B'$ using $F'$. The fact that going from $B'$ to $V(G)$ actually only uses forces in $F'(B')$ is a technical detail and will generally be suppressed. Instead, we will think of $F'$ as being a process of $B'$ in $G$.

\begin{thm}\label{double}
A set $B$ is a $1$-leaky forcing set if and only if for all $v\in V(G)\setminus B$, there exists $x\fs v,y\fs v\in \calf(B)$ with $y\neq x$.
\end{thm}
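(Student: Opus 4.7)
The forward direction is immediate: it is Lemma \ref{lplus} specialized to $\ell=1$. For the converse, assume the double-force hypothesis; since this makes $\calf(B)$ nonempty, $B$ is already a zero forcing set. I plan to fix an arbitrary leak $u\in V(G)$, set $B^\ast=\binf_{\{u\}}$ (the blue set produced by exhaustively applying the color change rule with $u$ forbidden from forcing) and $W=V(G)\setminus B^\ast$, and derive a contradiction from the assumption $W\neq\emptyset$.

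The engine is a structural observation about every forcing process of $B$. Fix any such $F$; since $F$ eventually forces $V(G)$, there is a first vertex $w\in W$ to become blue in $F$, and the blue set $B''$ in $F$ just before $w$ is forced satisfies $B''\subseteq B^\ast$ by the choice of $w$. Let the corresponding force be $y\fs w$. I will establish (i) $y=u$, and (ii) $w$ is the unique neighbor of $u$ in $W$. For (i): if $y\neq u$, then at $B''$ every non-$w$ neighbor of $y$ is blue in $B''\subseteq B^\ast$, while $w\in W$ is white in $B^\ast$; so in the leak game at the frozen set $B^\ast$ the non-leak blue vertex $y\in B^\ast$ has exactly one white neighbor $w$ and would force it, contradicting the maximality of $B^\ast$. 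For (ii): since $u$ forces $w$ at $B''$, every non-$w$ neighbor of $u$ lies in $B''\subseteq B^\ast$, hence is not in $W$.

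The uniqueness in (ii) is the key point: $w$ is determined by $u$ and $W$, independently of the choice of $F$. Call this uniquely determined vertex $\sigma$. Then every forcing process of $B$ must contain $u\fs\sigma$. But by the double-force hypothesis applied to $\sigma\in W\subseteq V(G)\setminus B$, there is some $x\neq u$ with $x\fs\sigma\in\calf(B)$, which by definition supplies a forcing process $F^{x}$ of $B$ containing $x\fs\sigma$. Then $F^{x}$ would contain both $u\fs\sigma$ and $x\fs\sigma$, forcing $\sigma$ twice --- impossible. Hence $W=\emptyset$, so $B^\ast=V(G)$ and $B$ is a $1$-leaky forcing set for the leak $u$; since $u$ was arbitrary, we are done.

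The main obstacle is isolating (ii), since the rest of the argument is bookkeeping with the color change rule and a one-line invocation of the hypothesis. Lemma \ref{switch} is not strictly required for this approach, but one could instead phrase the contradiction in (i) constructively by splicing $F$ with a no-$u$ forcing process at $B''$ to exhibit an explicit extension of $B^\ast$.
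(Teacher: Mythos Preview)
Your argument is correct and genuinely different from the paper's. The paper proves the converse constructively: given a leak $z$, it takes a forcing process $F$, runs it until $z$ is about to force some $v$, then invokes the hypothesis to find a second process $F'$ containing $y\to v$ with $y\neq z$, and splices $F$ and $F'$ together via Lemma~\ref{switch} to produce a single forcing process in which $z$ never forces. Your proof instead analyzes the stuck set $B^\ast=\binf_{\{u\}}$ directly: by looking at the first $W$-vertex forced in an arbitrary process and using the maximality of $B^\ast$, you extract a single bottleneck vertex $\sigma$ (the unique neighbor of $u$ in $W$) that must be forced by $u$ in \emph{every} forcing process of $B$, which immediately contradicts the existence of a process containing $x\to\sigma$ with $x\neq u$. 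Your route is more self-contained---it bypasses Lemma~\ref{switch} entirely and needs no process-splicing---while the paper's route is constructive and, more importantly, develops the splicing machinery that is reused in the proof of Theorem~\ref{setofforces}.
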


\begin{proof}
If $B$ is a $1$-leaky forcing set, then Lemma \ref{lplus} gives $x\fs v, y\fs v\in \calf(B)$ with $y\neq x$ for all $v\in V(G)\setminus B$.

Now assume for all $v\in V(G)\setminus B$, there exists $x\fs v, y\fs v\in \calf(B)$ with $y\neq x$. Let $z$ be a leak. It suffices to show that there exists a forcing process of $B$ that does not contain a force originating at $z$. Let $F$ be some set of forces of $B$ in $G$. If $z$ is the end of a forcing chain of $F$, then $B$ can force $G$ using $F$. Therefore, assume that $z$ is not the end of a forcing chain in $F$. This implies that there exists $v$ such that $z \fs v\in F$. Let $B'$ be a set of blue vertices obtained form $B$ using $F$ such that $z\fs v$ is valid given $B'$, but $v\notin B'$.  By assumption, there exists $y$ such that $y\fs v\in \mathcal F(B)$ and $y\neq z$. Therefore, there exists a set of forces $F'$ of $B$ in $G$ with $y\fs v$. Since $y\fs v\in F'$, we know that $F'$ can force $N[y]\setminus\{v\}$ without vertices in $N[v]\setminus\{y\}$. Thus, $z\fs u\notin F'(B')$ for any $u\in V(G)$. Therefore, $(F\setminus F(B'))\cup F'(B')$ is a forcing process that does not use a force originating from $z$. 
\end{proof}

A straight forward generalization of Theorem \ref{double} is false. In particular, the converse of Lemma \ref{lplus} does not hold by the following counterexample. Consider $G=K_{\ell+1}\square K_2$ where $\ell\geq 2$ with $V(G) = \{u_i:1\le i \le \ell+1\}\cup \{v_i:1\le i \le \ell+1\}$ with $u_i$'s and $v_i$'s both inducing cliques (see Figure \ref{fig:cliquematching}). Let the blue set $B=\{u_i:1\le i \le \ell+1\}$ and notice that every vertex in $\{v_i:1\le i \le \ell+1\}$ can be forced in $\ell+1$ ways.  However, setting $\{u_2,\dots,u_{\ell+1}\}$ as leaks will prevent $B$ from coloring the rest of the graph blue. Therefore, $B$ is not an $\ell$-leaky forcing set. 

 \begin{figure}[H]
        \centering{
        \includegraphics[width=0.5\textwidth]{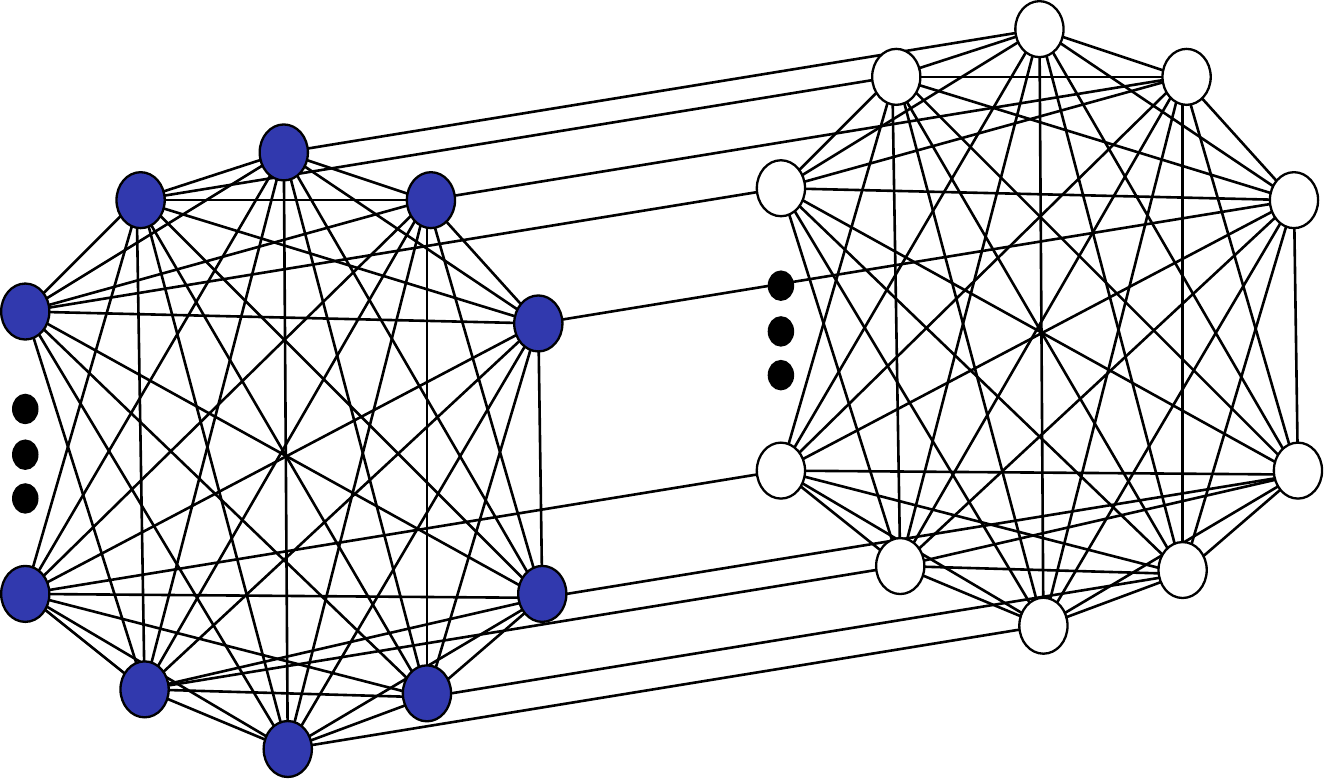}
        \caption{Counterexample to an attempted generalization of Theorem \ref{double}.}
        \label{fig:cliquematching}}
\end{figure}

Theorem \ref{setofforces} requires new definitions.  For a set of leaks $L$, let $\mathcal F_L(B)$ be the set of forces that are possible with initial blue set $B$ and leaks  $L$.  In other words, $u\fs v\in \calf_L(B)$ implies $u\notin L$ and there exists a forcing process $F$ such that $u\fs v\in F$. Before we proceed with the proof of Theorem \ref{setofforces}, we need to understand how a zero forcing set is disrupted by leaks. 

\begin{lem}\label{fail}
If $B$ is an $(\ell-1)$-leaky forcing set and $L$ is a set of $k\geq \ell$ leaks, then $|L\setminus B_L^{[\infty]}|\leq k-\ell$.
\end{lem}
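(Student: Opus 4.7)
The plan is an argument by contradiction that leverages the $(\ell-1)$-leaky hypothesis on a specific subset of $L$. I would split $L$ into $A = L \cap B_L^{[\infty]}$ (the leaks that eventually turn blue) and $W = L \setminus B_L^{[\infty]}$ (the leaks that stay white), so $|A| + |W| = k$. Assuming for contradiction that $|W| > k - \ell$ gives $|A| \leq \ell - 1$, and since $B$ is an $(\ell-1)$-leaky forcing set, there exists a forcing process $F$ of $B$ with leaks $A$ that colors every vertex of $G$ blue.

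The crux is then to extract from $F$ an initial segment that remains legal even when the leak set is enlarged from $A$ to $L$. I would pick $w \in W$ to be the \emph{first} vertex of $W$ to be forced in $F$ (it exists because $F$ forces everything and $|W| \geq 1$) and consider the sub-sequence $F_0 \subseteq F$ consisting of every force performed up to and including the one that colors $w$. Every force in $F_0$ is performed by a vertex outside $A$ (by definition of $F$), and crucially also outside $W$: at every time strictly before $w$ is forced in $F$, no $W$-vertex is blue, so no $W$-vertex can perform a force in $F_0$, and the vertex that finally forces $w$ must be blue and hence distinct from every still-white vertex of $W$. Thus every force in $F_0$ originates outside $L = A \cup W$, i.e., is valid for the $L$-leaky process.

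Executing the forces of $F_0$ in their $F$-order starting from $B$ with leaks $L$ therefore produces exactly the same blue sets step by step as they appear in $F$, and in particular colors $w$ blue, yielding $w \in B_L^{[\infty]}$ and contradicting $w \in W = L \setminus B_L^{[\infty]}$. The main obstacle to watch is guaranteeing that no $L$-illegal force (one originating from a vertex of $W$) sneaks into the chosen portion of $F$; selecting $w$ as the \emph{first} member of $W$ forced in $F$ is precisely what rules this out and lets the subprocess transfer cleanly from the $A$-leaky to the $L$-leaky setting.
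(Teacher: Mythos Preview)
Your proof is correct and shares the same overall architecture as the paper's: argue by contradiction, split $L$ into $A=L\cap B^{[\infty]}_L$ and $W=L\setminus B^{[\infty]}_L$, observe $|A|\le\ell-1$, and invoke the $(\ell-1)$-leaky hypothesis on the leak set $A$. The two proofs diverge only in how they cash out that last step. The paper argues statically: any blue vertex in $B^{[\infty]}_L$ with exactly one white neighbor must be a leak and hence lies in $A$, so $B^{[\infty]}_L$ is already closed under the $A$-leaky rule; thus $B^{[\infty]}_A=B^{[\infty]}_L\ne V(G)$ and $A$ itself witnesses failure of the $(\ell-1)$-leaky property. You argue dynamically in the reverse direction: take a complete $A$-leaky process $F$, isolate its prefix up to the first moment a vertex of $W$ is colored, and verify that this prefix is legal even with the full leak set $L$, forcing a $W$-vertex into $B^{[\infty]}_L$. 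Both routes formalize the same intuition---a leak that never turns blue can never have obstructed anything---but the paper's closure observation is terser, while your explicit prefix-transfer makes the passage between the two leak sets more transparent.
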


\begin{proof}
Assume that $L$ is a set of $\ell$ leaks, and let $|L\setminus B^{[\infty]}_L|\geq k-\ell+1$. Every vertex in $B^{[\infty]}_L$ has either $0$, $1$, or  at least $2$ white neighbors. If $v\in B^{[\infty]}_L$ such that $v$ has one white neighbor, then $v\in L$.  Since $|L|=|L\setminus B_L^{[\infty]}|+|L\cap B^{[\infty]}_L|$, it follows that $|L\cap B^{[\infty]}_L|\leq \ell-1$. Notice that leaks in $L\setminus B^{[\infty]}_L$ did not change the zero forcing behavior of $B$. In particular, these leaks never played a role in stopping $B$ from propagating because they never were blue. Therefore, $L\cap B^{[\infty]}_L$ is a set of at most $\ell-1$ leaks which show that $B$ is not an $(\ell-1)$-leaky forcing set.
\end{proof}

Notice that if $k=\ell$, then Lemma \ref{fail} says that an $(\ell-1)$-leaky forcing set $B$ will always be able to turn any set of $\ell$ leaks blue. Given an $(\ell-1)$-leaky forcing set $B$ and a set of $\ell$ leaks $L$, we can find a time at which every leak is blue.  The proof of Theorem \ref{double} singles out this time step, and argues that there is enough freedom in our choice of forces to proceed despite the leaks. The the proof of Theorem \ref{setofforces} singles out a time when at least $\ell-1$ leaks are blue, and recognizes that the constellation of blue and white vertices at this time is very similar to the situation covered in Theorem \ref{double}. 

The following diagram roughly depicts how first part of the proof of Theorem \ref{setofforces} goes: 

\begin{equation} B\overset{F}{\longrightarrow} B'\supseteq B^* \overset{F'}{\longrightarrow} V(G).\label{diagramforces}\end{equation}

\noindent We never explicitly name $F'$ in the proof. Instead, we use Lemma \ref{fail} to find $B'$ and then whittle down $G$ into a subgraph $G^*$ such that $B^*\subseteq B'$ is a $1$-leaky forcing set of $G'$. Using this fact, we implicitly find a forcing process $F'$ of $G^*$ by invoking Theorem \ref{double}. By construction, $F'$  circumvents the only leak that is not blue in $G^*$ and can be appended to  $F$ once $B'$ is colored blue using Lemma \ref{switch}.

\begin{thm}\label{setofforces}
A set $B$ is an $\ell$-leaky forcing set if and only if $B$ is an $(\ell-1)$-leaky forcing set such that for every set of $\ell -1$ leaks $L$ and $v\in V(G)\setminus B$ there exists $x\fs v , y\fs v\in \mathcal F_L(B)$ with $y\neq x$.
\end{thm}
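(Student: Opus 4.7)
The plan is to mirror the proof of Theorem \ref{double}, with $(\ell-1)$-leaky forcing and $\mathcal{F}_{L^*}(B)$ (for a chosen $(\ell-1)$-leak set $L^*$) playing the roles of zero forcing and $\mathcal{F}(B)$ respectively. For the forward direction, if $B$ is $\ell$-leaky then $B$ is $(\ell-1)$-leaky by Proposition \ref{ineq}. For the two-forcer condition, fix $L$ with $|L|=\ell-1$ and $v \in V(G)\setminus B$, and suppose towards contradiction that at most one vertex $x$ satisfies $x\fs v \in \mathcal{F}_L(B)$. Let $L' = L \cup \{x\}$ (or $L\cup\{w\}$ for an arbitrary $w$ if no such $x$ exists), so $|L'|=\ell$. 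By the $\ell$-leaky hypothesis there is a forcing process $F^*$ of $B$ avoiding $L'$; since $v \notin B$ it must contain some force $u\fs v$ with $u\notin L'$, and since $L \subseteq L'$ this process also witnesses $u\fs v \in \mathcal{F}_L(B)$. Uniqueness forces $u=x$, contradicting $x \in L'$.

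For the reverse direction, fix $L \subseteq V(G)$ with $|L|=\ell$, pick any $z \in L$, and set $L^* = L\setminus\{z\}$, so $|L^*|=\ell-1$. Since $B$ is $(\ell-1)$-leaky, choose a forcing process $F$ of $B$ in $G$ avoiding $L^*$. If $z$ performs no force in $F$, then $F$ already avoids $L$ and we are done. Otherwise there is a unique force $z\fs v \in F$; let $B'$ be the state reached from $B$ via $F$ just before $z\fs v$ fires, so $v\notin B'$ and the only white neighbor of $z$ at $B'$ is $v$. Apply the two-forcer hypothesis to the $(\ell-1)$-leak set $L^*$ and the vertex $v$: there are two distinct vertices that force $v$ in $\mathcal{F}_{L^*}(B)$, so I may pick $y\neq z$ with $y\fs v \in \mathcal{F}_{L^*}(B)$, and then fix a forcing process $F'$ of $B$ avoiding $L^*$ with $y\fs v \in F'$.

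The heart of the argument is then a direct transcription of the key step in Theorem \ref{double}: $z\fs u \notin F'(B')$ for every $u$. Indeed, if $z\fs u \in F'$ then $u \in N(z)$; since $v$ is forced exactly once in $F'$, namely by $y$, and $y\neq z$, we get $u\neq v$; but the only white neighbor of $z$ at $B'$ is $v$, so $u$ must already be blue at $B'$, i.e.\ $u \in B'$, which places $z\fs u$ outside $F'(B')$. Lemma \ref{switch} now gives that $(F\setminus F(B')) \cup F'(B')$ is a forcing process of $B$: the first piece consists only of forces of $F$ that fired before $z\fs v$, so it inherits "avoids $L^*$" from $F$ and avoids $z$ because $z$'s unique force $z\fs v$ has target $v\notin B'$; the second piece inherits "avoids $L^*$" from $F'$ and avoids $z$ by the key claim. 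Hence the spliced process avoids all of $L = L^* \cup \{z\}$, establishing that $B$ is an $\ell$-leaky forcing set. The main technical obstacle is precisely the key claim above; everything else is orchestration of pieces already developed, and the claim itself goes through essentially verbatim once $L^*$ is absorbed into the "baseline" via $\mathcal{F}_{L^*}(B)$.
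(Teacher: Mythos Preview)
Your proof is correct, and your forward direction is essentially the paper's contrapositive argument. Your reverse direction, however, takes a genuinely different route.

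The paper's reverse direction invokes Lemma~\ref{fail} to run the process (avoiding all of $L'$) until every leak is blue, then deletes $\ell-1$ of the now-blue leaks to pass to a smaller graph $G^*=G-L''$. It then argues that the surviving blue set $B^*$ is a $1$-leaky forcing set of $G^*$ (translating the two-forcer hypothesis from $\mathcal F_{L''}(B)$ in $G$ to $\mathcal F(B^*)$ in $G^*$) and finishes by invoking Theorem~\ref{double} as a black box on $G^*$ with the single remaining leak. In contrast, you never delete vertices or appeal to Lemma~\ref{fail}: you fix one distinguished leak $z$, absorb the other $\ell-1$ leaks into the baseline by working with $\mathcal F_{L^*}(B)$, and then rerun the \emph{proof} of Theorem~\ref{double} verbatim in this relativized setting, splicing via Lemma~\ref{switch} exactly as before.

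What each buys: the paper's reduction makes the inductive structure explicit (the $\ell$-leaky problem literally becomes a $1$-leaky problem on a smaller graph), at the cost of needing Lemma~\ref{fail} and a somewhat delicate translation of $\mathcal F_{L''}(B)$ in $G$ into $\mathcal F(B^*)$ in $G^*$. Your argument is leaner---it needs only Lemma~\ref{switch} and the two-forcer hypothesis, and avoids both Lemma~\ref{fail} and the graph-deletion bookkeeping---while making transparent that Theorem~\ref{setofforces} is precisely Theorem~\ref{double} with zero forcing replaced by $(\ell-1)$-leaky forcing throughout.
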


\begin{proof}
Let $B$ be an $(\ell-1)$-leaky forcing set such that for every set of $\ell-1$ leaks $L$ and $v\in V(G)\setminus B$ there exists $x\fs v, y\fs v\in \mathcal F_L(B)$ with $y\neq x$. Let $L'$ be a set of $\ell$ leaks. 

Since $B$ is an $(\ell-1)$-leaky forcing set, by Lemma \ref{fail} we can apply forces  until all leaks in $L'$ are blue. Let the resulting set of blue vertices be $B'$, let $L''\subseteq L'\cap B'$ be a set of $\ell-1$ leaks, and $B^*=B'\setminus L''$. Notice that if a blue vertex is a leak or has performed a force, then it can be safely deleted without altering the zero forcing behavior of the remaining blue vertices.  

Consider $G^*=G-L''$. Since $B$ is an $(\ell-1)$-leaky forcing set, $B^*$ is a zero forcing set of $G^*$.  We know that for all $v\in V(G)\setminus B$, there exists $x\fs v,y\fs v \in \calf_{L''}(B)$ in $G$. However, since vertices in $L''$ are not allowed to perform forces, it follows that $x,y$ are not in $L''$. Thus, $x\fs v, y\fs v\in \calf(B^*)$ in $G^*$, and $B^*$ is a $1$-leaky forcing set of $G^*$ by Theorem \ref{double}. Therefore, $B$ could force $G$ despite $L'$, and $B$ is an $\ell$-leaky forcing set of $G$. For a summary of this part of the proof, see diagram (\ref{diagramforces}) above.

To prove the contrapositive of the forward direction, suppose that $B$ is an $(\ell-1)$-leaky forcing set with a set of $\ell-1$ leaks $L$ and a vertex $v\in V(G)\setminus B$ such that there does not exist $x\fs v, y\fs v\in \mathcal F_L(B)$ with $y\neq x$. Since $B$ is an $(\ell-1)$-leaky forcing set, there must exist exactly one force $x\fs v\in \mathcal F_L(B)$. However, $L\cup \{x\}$ is a set of $\ell$ leaks that will prevent $B$ from coloring $v$ blue. Therefore, $B$ is not an $\ell$-leaky forcing set.
\end{proof}

Theorem  \ref{setofforces} provides a way of analyzing when $G$ is $\ell$-resilient and more generally when $\Z\sub k(G) = \Z\sub \ell(G)$. In particular, we can determine if a $k$-leaky forcing set $B$ is in fact an $\ell$-leaky forcing set by repeatedly applying Theorem \ref{setofforces}. In this way, Theorem \ref{setofforces} is a way to build up a characterization of $\ell$-leaky forcing sets and which graphs are $\ell$-resilient. To illustrate this point, notice that a zero forcing set $B$ is a $2$-leaky forcing set if and only if $B$ satisfies conditions in Theorem \ref{setofforces} for $\ell=1$ and $\ell=2$. 

Theorem \ref{double} results in Corollary \ref{2z0}, but first we introduce some definitions. Let $B$ be a zero forcing set of a graph $G$. A {\it reversal} of $B$ given a set of forces $F$ is the set of vertices of $G$ that appear at the ends of maximal forcing chains given $F$. Equivalently, let the reversal of $B$ given $F$ be denoted by \[R_F(B)=\{v\in V(G): v\fs y\notin F, \forall y\in V(G)\}.\] Often, we do not specify a particular forcing process $F$ and would be happy with any forcing process. In this case, we let $R(B)$ denote a reversal of $B$ with the understanding that there is some underlying forcing process $F$ which is unnamed and unspecified. 

In \cite{BBFHHSVV}, Barioli et al. proved the following theorem:

\begin{thm}\cite{BBFHHSVV}\label{reverse}
If $B$ is a zero forcing set of $G$, then so is any reversal of $B.$
\end{thm}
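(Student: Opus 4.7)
The plan is to show that reversing the forces of $F$ in reverse chronological order yields a valid forcing process starting from $R_F(B)$. First I would enumerate the forces in $F$ chronologically as $f_1, f_2, \dots, f_N$ with $f_j = (u_j \fs v_j)$ and $N = |V(G) \setminus B|$. The targets $v_1,\dots,v_N$ are precisely the vertices of $V(G)\setminus B$ (so they are distinct), and the sources $u_1,\dots,u_N$ are also distinct because each vertex performs at most one force along the unique maximal chain it lies in. With these identifications, $R_F(B) = V(G)\setminus\{u_1,\dots,u_N\}$.

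Next I would define the candidate reversal process $F' = \{\bar f_N, \bar f_{N-1},\dots,\bar f_1\}$ with $\bar f_j = (v_j \fs u_j)$, applied in that chronological order starting from $R_F(B)$. A quick bookkeeping check shows that just before $\bar f_j$ fires, the blue set is exactly $R_F(B) \cup \{u_{j+1},\dots,u_N\}$, so after processing all $N$ reversed forces the blue set becomes $V(G)$; it remains only to verify each $\bar f_j$ is valid. Its target $u_j$ is adjacent to $v_j$ (inherited from $f_j$), and $u_j$ is white at that moment because $u_j \notin R_F(B) \cup \{u_{j+1},\dots,u_N\}$. So the crux is to show $u_j$ is the \emph{only} white neighbor of $v_j$ at that moment.

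The main obstacle, and the heart of the argument, is handling an arbitrary neighbor $w$ of $v_j$ with $w\neq u_j$. Since $V(G) = R_F(B) \sqcup \{u_1,\dots,u_N\}$, either $w\in R_F(B)$ (in which case $w$ is blue from the start of the reversal and we are done), or $w = u_k$ for some $k\neq j$. In the latter case I would rule out $k < j$ by contradiction: if $k<j$, then at time $k$ in the original process, the force $u_k \fs v_k$ requires $v_k$ to be the only white neighbor of $u_k = w$, so the neighbor $v_j$ of $w$ (distinct from $v_k$ since $j\neq k$ and targets are distinct) must already be blue at time $k$, contradicting the fact that $v_j$ is first forced at the later time $j$. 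Therefore $k > j$, so $u_k$ has already been turned blue in the reversal before $\bar f_j$ is applied, and $w$ is indeed blue.

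Putting the pieces together, every $\bar f_j$ is a valid force when applied, so $F'$ is a legitimate zero forcing process starting from $R_F(B)$, proving that $R_F(B)$ is a zero forcing set. The only technical care needed is the indexing lemma in the previous paragraph; everything else is bookkeeping about when sources and targets become blue in the two processes.
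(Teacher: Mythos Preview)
Your proposal is correct and follows exactly the approach the paper attributes to \cite{BBFHHSVV}: the paper simply records that if $F$ is a forcing process of $B$, then $F'=\{x\fs y : y\fs x\in F\}$ is a forcing process for $R_F(B)$, and your argument is precisely the detailed verification of this claim. The only point left implicit in your write-up is that $v_j$ itself is blue when $\bar f_j$ fires, but this follows immediately from your own case analysis (either $v_j\in R_F(B)$ or $v_j=u_k$ with $k>j$), so there is no real gap.
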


\noindent The proof of Theorem \ref{reverse} shows that a reversal $R_F(B)$ is a zero forcing set by reversing the forces in $F$. That is, if $F$ is a forcing process of $B$, then $F'=\{x\fs y: y\fs x\in F\}$ is a forcing process for $R_F(B)$.

\begin{cor}\label{2z0}
For all graphs $G$, $\Z\sub 1(G)\leq 2\Z \sub 0(G).$ In particular, if $B$ is a zero forcing set of $G$ and $R(B)$ is a reversal, then $B\cup R(B)$ is a $1$-leaky forcing set.
\end{cor}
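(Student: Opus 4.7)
The plan is to apply Theorem \ref{double}. Fix a zero forcing set $B$ with forcing process $F$ and reversal $R(B)=R_F(B)$. The set $B\cup R(B)$ is certainly a zero forcing set (since $B$ is), so by Theorem \ref{double} it suffices to exhibit, for each $v\in V(G)\setminus(B\cup R(B))$, two distinct vertices $x,w$ with $x\fs v,\, w\fs v\in\calf(B\cup R(B))$. Because $v\notin B$, the process $F$ contains a unique force $x\fs v$; because $v\notin R(B)$, vertex $v$ performs some force $v\fs w\in F$, so by Theorem \ref{reverse} the reversed process $F'=\{b\fs a:a\fs b\in F\}$ (a forcing process of $R(B)$) contains $w\fs v$. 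The vertices $x$ and $w$ are distinct: $x$ is blue at the moment $x\fs v$ occurs in $F$, so it cannot later be the target of the force $v\fs w$.

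The main technical step, and what I expect to be the principal obstacle, is to check that $x\fs v$ and $w\fs v$ genuinely lie in $\calf(B\cup R(B))$, not merely in $\calf(B)$ and $\calf(R(B))$ separately. For $x\fs v$, my approach is a pruning argument: consider $F^{*}=\{p\fs q\in F:q\notin R(B)\}$, applied in the chronological order inherited from $F$ but starting from the blue set $B\cup R(B)$. Each retained force $p\fs q$ still targets a white vertex at performance time, because $F$ targets $q$ only once and $q\notin R(B)$; moreover $p$ still has exactly one white neighbor, since enlarging the initial blue set can only shrink white neighborhoods. Thus $F^{*}$ is a valid forcing process of $B\cup R(B)$ and it retains $x\fs v$ because $v\notin R(B)$. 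The entirely symmetric pruning of $F'$, discarding forces whose targets lie in $B$, yields a forcing process of $B\cup R(B)$ containing $w\fs v$, as $v\notin B$. Invoking Theorem \ref{double} then gives that $B\cup R(B)$ is a $1$-leaky forcing set.

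Finally, the cardinality bound is clean: the maximal forcing chains of $F$ partition $V(G)$, and they biject with $B$ via their starting vertices and with $R(B)$ via their ending vertices, so $|R(B)|=|B|$. Therefore $|B\cup R(B)|\le 2|B|$, and choosing $B$ of size $\Z\sub 0(G)$ gives $\Z\sub 1(G)\le 2\Z\sub 0(G)$.
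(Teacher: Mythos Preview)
Your proof is correct and follows essentially the same approach as the paper: use the forward process $F$ and its reversal $F'$ to exhibit two distinct forces into each $v\notin B\cup R(B)$, then invoke Theorem~\ref{double} and the equality $|R(B)|=|B|$. The paper's version is terser---it simply cites ``by proof of Theorem~\ref{reverse}'' without explicitly checking that the two forces lie in $\calf(B\cup R(B))$ rather than merely in $\calf(B)$ and $\calf(R(B))$---so your pruning argument is a welcome clarification of a point the paper glosses over.
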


\begin{proof}
Let $B$ be a minimum zero forcing set of $G$, and let $R(B)$ denote a reversal of $B$.  For all $v\in V(G)\setminus (B\cup R(B))$, there exists $x\neq y$ such that $x\fs v, y\fs v\in \mathcal F(B\cup R(B))$ by proof of Theorem \ref{reverse}. Therefore, by Theorem \ref{double}, $B\cup R(B)$ is a $1$-leaky forcing set of $G$. Finally, notice that $|R(B)|=|B|$ to conclude that $\Z\sub 1(G)\leq 2\Z \sub 0(G).$
\end{proof}

If equality in Corollary \ref{2z0} holds for $G$, then for every minimum zero forcing set $B$, $v\in B$, and forcing process $F$, there exists $u\in V(G)\setminus B$ such that $v\fs u\in F$.  That is, every vertex in a minimum zero forcing set actually performs a force. This follows from the fact that if $v\in B\cap R(B)$, then $v$ is the start and end of a chain in a zero forcing process. 

One of the interpretations of a leak is that $v$ is a leak if $v$ must appear at the end of its zero forcing chain. Recall that a reversal of a zero forcing set $B$ consists of the ends of the forcing chains of $B$ given a process $F$. Therefore, if we have an $\ell$-leaky forcing set $B$ and $L$ is a set of $\ell$ leaks, then $L\subseteq R(B)$. Furthermore, we can use an  $\ell$-leaky forcing set to build a zero forcing set around an arbitrary set of $\ell$ vertices by appealing to the reversal. 

\begin{lem}\label{reversal}
If $B$ is has an $\ell$-leaky forcing set of $G$, then any $\ell$ vertices of $G$ are in a zero forcing set of size $|B|$.
\end{lem}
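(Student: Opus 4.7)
The plan is to use the leaks themselves as the prescribed $\ell$ vertices and then build the required zero forcing set by taking an appropriate reversal of $B$. Concretely, let $S\subseteq V(G)$ be an arbitrary set of $\ell$ vertices; our goal is to exhibit a zero forcing set of $G$ of size $|B|$ that contains $S$.

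First I would appeal to the hypothesis that $B$ is an $\ell$-leaky forcing set by taking $S$ itself to be the leak set. By definition, there is a forcing process $F$ of $B$ in $G$ such that no force in $F$ originates at a vertex of $S$, yet $F$ colors all of $G$ blue. In particular, for every $v\in S$, the force $v\fs y$ fails to appear in $F$ for every $y\in V(G)$, so $S\subseteq R_F(B)$ by the definition of a reversal.

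Next I would invoke Theorem \ref{reverse}: since $B$ is a zero forcing set of $G$ (every $\ell$-leaky forcing set is also a zero forcing set by Proposition \ref{ineq}), the reversal $R_F(B)$ is again a zero forcing set of $G$. Combining this with the inclusion $S\subseteq R_F(B)$ gives a zero forcing set containing the prescribed $\ell$ vertices; it only remains to check that $|R_F(B)|=|B|$.

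The only step requiring some care, and the one I would expect to be the main (very minor) obstacle, is the cardinality bookkeeping. Here I would use that the maximal forcing chains induced by $F$ partition $V(G)$, each chain having a unique starting vertex in $B$ and a unique terminal vertex in $R_F(B)$; hence the number of chains equals both $|B|$ and $|R_F(B)|$, so $|R_F(B)|=|B|$. This yields a zero forcing set of $G$ of size $|B|$ containing $S$, completing the proof.
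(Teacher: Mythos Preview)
Your proof is correct and follows essentially the same approach as the paper: set the prescribed $\ell$ vertices as leaks, take a forcing process $F$ of $B$ that avoids them, and observe that the reversal $R_F(B)$ is a zero forcing set (Theorem~\ref{reverse}) containing those vertices. The only difference is that you spell out the $|R_F(B)|=|B|$ bookkeeping via the maximal-chain partition, which the paper leaves implicit.
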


\begin{proof}
Let $L$ be an arbitrary set of $\ell$ leaks in $G$. Since $B$ is an $\ell$-leaky forcing set, there exists a forcing process $F$ that colors $G$ blue. Let $R_F(B)$ be the reversal of $B$ given $F$. Notice that $L\subseteq R_F(B)$ since vertices in $L$ do not perform forces but eventually become blue. Furthermore, $R_F(B)$ is a zero forcing set by Theorem \ref{reverse}.
\end{proof}

The idea in the proof of Lemma \ref{reversal} is that given an $\ell$-leaky forcing set $B$, we can cleverly pick leaks to guarantee certain graph properties. For Lemma \ref{reversal}, clever leak choice lets us find zero forcing sets that contain the leaks. The next theorem shows that  clever leak choice leads to a general lower bound for $\ell$-leaky forcing sets.

\begin{thm}\label{lower}
Given a graph $G$ on $n$ vertices with $2\leq \ell\leq n-3,$ we have $\ell + 2 \le \Z_{(\ell)}(G).$
\end{thm}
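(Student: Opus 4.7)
I argue by contradiction: suppose $B$ is an $\ell$-leaky forcing set with $|B|\le\ell+1$. The easy case is $|B|\le\ell$: since $\ell\le n-3<n$ one can pick a set of $\ell$ leaks containing $B$, leaving no non-leak blue vertex to initiate the forcing process, while $V(G)\setminus B\ne\emptyset$, an immediate contradiction. So assume $|B|=\ell+1$.

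The core idea is to exploit the forcing schedule when the leaks are taken to be $B\setminus\{u^*\}$ for an arbitrary $u^*\in B$. Because $u^*$ is the only non-leak blue vertex at time zero, the first force must come from $u^*$, which requires $u^*$ to have exactly one neighbor in $V(G)\setminus B$; call it $f(u^*)$. After $u^*\fs f(u^*)$, the only non-leak blue vertex with any chance of forcing next is $f(u^*)$, and iterating this observation shows that the entire forcing schedule is a single chain
\[ u^*\fs v_1\fs v_2\fs\cdots\fs v_{n-\ell-1}, \]
where $\{v_1,\ldots,v_{n-\ell-1}\}=V(G)\setminus B$, $v_1=f(u^*)$, and each $v_i$ is adjacent to $v_{i+1}$ but to no $v_j$ with $j>i+1$. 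Ranging over $u^*\in B$ shows that every vertex of $B$ has exactly one neighbor in $V(G)\setminus B$, so $f\colon B\to V(G)\setminus B$ is a well-defined function; moreover, the endpoint $w(u^*):=v_{n-\ell-1}$ of each chain has exactly one neighbor in $V(G)\setminus B$, namely $v_{n-\ell-2}$.

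Combining this structure with Lemma \ref{lplus}: since $w(u^*)\in V(G)\setminus B$ has degree at least $\ell+1$ but only one neighbor outside $B$, it has at least $\ell$ neighbors in $B$; because $u\in B$ is adjacent to $w(u^*)$ iff $f(u)=w(u^*)$, this yields $|f^{-1}(w(u^*))|\ge\ell$. But $\sum_{w}|f^{-1}(w)|=|B|=\ell+1$, so at most one vertex of $V(G)\setminus B$ can satisfy $|f^{-1}|\ge\ell$ (two such vertices would force $2\ell\le\ell+1$, violating $\ell\ge 2$). Consequently there is a single vertex $w^*$ with $w(u^*)=w^*$ for every $u^*\in B$.

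Finally, since $|f^{-1}(w^*)|\ge\ell\ge 2$, I may pick some $u^*\in B$ with $f(u^*)=w^*$; the chain from $u^*$ then begins at $v_1=w^*$ and ends at $v_{n-\ell-1}=w^*$, forcing $n-\ell-1=1$ and hence $n=\ell+2$, which contradicts $\ell\le n-3$. I anticipate the main technical care is in justifying that the chain structure is truly forced by the leak placement---that is, that no alternative forcing schedule exists---which boils down to the observation that at each step exactly one non-leak blue vertex has any white neighbors at all to work with.
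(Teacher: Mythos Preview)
Your proof is correct, and it takes a genuinely different route from the paper's after the common opening. Both arguments begin the same way: rule out $|B|\le\ell$ by making all of $B$ leaks, then for $|B|=\ell+1$ set $L=B\setminus\{u^*\}$ to see that each $u^*\in B$ has a unique neighbor $f(u^*)$ in $V(G)\setminus B$ and that $G[V(G)\setminus B]$ is a path $v_1v_2\cdots v_{n-\ell-1}$.

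From there the paper proceeds by placing leaks at \emph{white} vertices. Fixing the chain from $x_1$, it shows that the unique outside neighbor of $x_2$ must be the far endpoint $v_{n-\ell-1}$ (otherwise the leak set $\{v_i,x_3,\dots,x_{\ell+1}\}$ blocks forcing), and then, looking at the outside neighbor $v_j$ of $x_3$, it exhibits the leak set $\{v_j,x_2,x_4,\dots,x_{\ell+1}\}$ that traps forcing on the short side of the path and leaves $v_{j+1}$ unforced.

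You instead never place a leak outside $B$: you use Lemma~\ref{lplus} to get $d(w(u^*))\ge\ell+1$, combine this with the path structure to obtain $|f^{-1}(w(u^*))|\ge\ell$, and then observe that $\sum|f^{-1}(w)|=\ell+1$ with $\ell\ge 2$ forces all the chain endpoints $w(u^*)$ to coincide in a single vertex $w^*$; choosing $u^*\in f^{-1}(w^*)$ then collapses the chain and contradicts $n\ge\ell+3$. This counting argument is cleaner and more structural than the paper's ad hoc leak placements, and it makes transparent exactly where the hypothesis $\ell\ge 2$ enters (the pigeonhole step $2\ell\le\ell+1$). The paper's approach, on the other hand, is more self-contained in that it does not invoke Lemma~\ref{lplus} and instead exhibits explicit obstructing leak sets throughout.
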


\begin{proof}
For the sake of contradiction, let $B=\{x_1,\dots, x_{k}\}$ be an $\ell$-leaky forcing set with $k\leq \ell+1$. Setting $L=B$ as leaks shows that $k=\ell+1$. Furthermore, we can conclude that $x_i$ has exactly one neighbor in $V(G)\setminus B$ by setting $L=B\setminus \{x_i\}$ as leaks. Set $x_2,\dots, x_{\ell+1}$ as leaks to obtain a forcing chain $x_1\fs v_1\fs v_2\fs \dots \fs v_{n-\ell-1}.$ This implies that $v_iv_j\in E(G)$ if and only if $j=i+1$. Let $v_i$ be the neighbor of $x_2$ in $V(G)\setminus B$. If $1\leq i\leq n-\ell-2$, then setting $v_i,x_3,\dots,x_{\ell+1}$ as the leaks gives a contradiction. Therefore, $i=n-\ell-1$. Let $v_j$ be the neighbor of $x_3$ in $V(G)\setminus B$. Without loss of generality, suppose that $|j-1|\leq |n-\ell-1-j|$. Now set $L=\{v_j,x_2,x_4,\dots x_{\ell+1}\}$ as leaks. Notice that $v_{j+1}$ exists since $n-\ell-1\geq 2$. Furthermore, $v_{j+1}$ will not get forced given $L$ and $B$ is not an $\ell$-leaky forcing set. This is a contradiction.
\end{proof}

\section{Resilience and structural results}
\label{structure}

In this section, structural properties of $\ell$-resilient graphs are explored, and various differences between zero forcing sets and $\ell$-leaky forcing sets are shown. There is a natural relationship between the structure of a graph, and the behavior of $\ell$-leaky forcing sets of the graph. On one hand, the graph structure places restrictions on $\ell$-leaky forcing sets. On the other hand, certain forcing behaviors set constraints on the underlying graph.

The following lemma is an example of how graph structures place restrictions on $\ell$-leaky forcing sets.

\begin{lem}\label{degree}\cite{DK}
For any graph $G$, any $\ell$-leaky forcing set will contain at least those vertices in $G$ of degree $\ell$ or less.
\end{lem}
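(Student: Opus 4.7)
The plan is to prove the contrapositive: if some vertex $v \in V(G)$ with $\deg(v) \le \ell$ is not in $B$, then $B$ fails to be an $\ell$-leaky forcing set. Since $v \notin B$, the only way $v$ can be turned blue is for some neighbor of $v$ to force it. The key observation is that $\deg(v) \le \ell$ means the set of potential ``forcers'' of $v$ is small enough to be entirely disabled by $\ell$ leaks.

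Concretely, I would start by assuming $v \in V(G) \setminus B$ with $\deg(v) \le \ell$. I would then choose a set of leaks $L$ with $|L| = \ell$ satisfying $N(v) \subseteq L$; this is possible since $|N(v)| = \deg(v) \le \ell$, so one may take $L = N(v) \cup S$ for any $S \subseteq V(G) \setminus N(v)$ of size $\ell - \deg(v)$ (for instance, choosing $S$ to be any $\ell - \deg(v)$ vertices other than $v$, which exist provided $|V(G)| \ge \ell + 1$; otherwise the statement is vacuous or trivial since $B$ would already need to be all of $V(G)$).

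With $L$ fixed, every neighbor of $v$ is a leak and therefore cannot perform a force. In particular, no force of the form $u \fs v$ lies in $\calf_L(B)$, since such a force would require $u \in N(v) \subseteq L$. Consequently, starting from $B$ with leak set $L$, the vertex $v$ can never be colored blue, so $\binf_L \ne V(G)$. This shows $B$ is not an $\ell$-leaky forcing set, completing the contrapositive.

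I do not expect a significant obstacle here: the argument is essentially a direct ``block every potential forcer'' construction, and the only subtlety is the boundary case in which $V(G)$ is too small to accommodate $\ell$ leaks outside $N(v)$, which can be disposed of trivially.
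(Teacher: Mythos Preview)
Your argument is correct and is exactly the natural proof: place leaks on all of $N(v)$ so that $v$ can never be forced. Note that the paper does not actually prove this lemma; it is quoted from \cite{DK} without proof, so there is no ``paper's own proof'' to compare against. Your handling of the edge case is slightly more delicate than necessary---you may simply take $L$ to be any $\ell$-subset of $V(G)$ containing $N(v)$ (even allowing $v\in L$), which exists whenever $\ell\le |V(G)|$, the only regime in which $\ell$-leaky forcing is considered.
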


\noindent  Using Lemma \ref{degree}, $B$ must be chosen to be everything if $\ell=\Delta(G)$, since every vertex has degree $\Delta(G)$ or less. Proposition \ref{max} shows that if $B$ contains every vertex, then $\ell\geq \Delta(G)$.

\begin{prop}\label{max}
For any graph $G$ on $n$ vertices, $\Z\sub \ell(G)= n$ if and only if $\Delta(G) \leq \ell.$
\end{prop}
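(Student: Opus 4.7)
\medskip
\noindent\textbf{Proof proposal.} I will prove the two directions separately; both are short and follow from already-established tools.

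For the backward direction, assume $\Delta(G)\leq \ell$. Then every vertex of $G$ has degree at most $\ell$, so by Lemma~\ref{degree} every vertex must belong to any $\ell$-leaky forcing set. Hence $\Z\sub{\ell}(G)\geq n$, and since the full vertex set $V(G)$ is trivially an $\ell$-leaky forcing set (nothing needs to be forced), we also have $\Z\sub{\ell}(G)\leq n$. This gives equality.

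For the forward direction, I will argue the contrapositive: if $\Delta(G)\geq \ell+1$, then $\Z\sub{\ell}(G)\leq n-1$. Pick a vertex $v$ with $\deg(v)\geq \ell+1$ and set $B=V(G)\setminus\{v\}$. The only white vertex under $B$ is $v$, so every neighbor $u$ of $v$ has exactly one white neighbor, namely $v$. Given any set $L$ of $\ell$ leaks, at least $\deg(v)-\ell\geq 1$ neighbors of $v$ lie outside $L$, and any such neighbor can immediately force $v$. Thus $B$ is an $\ell$-leaky forcing set of size $n-1$, showing $\Z\sub{\ell}(G)\leq n-1<n$.

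The main obstacle here is essentially nonexistent — both inequalities are one-line observations once Lemma~\ref{degree} is in hand — so the only thing to be careful about is to state the ``only if'' direction via its contrapositive and to verify that the neighbor chosen to force $v$ really does have a unique white neighbor at the moment of the force (which is automatic because $v$ is the sole white vertex throughout).
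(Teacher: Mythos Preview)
Your proof is correct and follows essentially the same approach as the paper's: both use Lemma~\ref{degree} for the backward direction and, for the forward direction, take $B=V(G)\setminus\{v\}$ for a vertex $v$ of degree at least $\ell+1$ and use pigeonhole to find a non-leak neighbor that forces $v$.
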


\begin{proof}
If $\Delta(G)\leq \ell$, then $\Z\sub \ell(G) = n$ by Lemma \ref{degree}. 

Let $v$ be a vertex with $d(v) \geq \ell+1$. Let $B= V(G)\setminus\{v\}$ be the blue set, and let $L$ be a set of $\ell$ leaks. By the pigeon hole principle,  there is a vertex $u\in N(v)\setminus L$. Therefore, $u$ can force $v$. Thus, $B$ is an $\ell$-leaky forcing set and $\Z\sub \ell (G)<n$.
\end{proof}

A bound for the minimum degree of an $\ell$-resilient graph can be given using the fact that a vertex with $\ell$ neighbors must be blue, and placing $\ell$ leaks does not prevent the graph from forcing. 

\begin{prop}\label{minimum}
If $G$ is $\ell$-resilient, then $\delta(G)\geq \ell+1.$
\end{prop}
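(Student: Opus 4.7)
The plan is to prove the contrapositive: suppose $\delta(G)\leq \ell$ and pick $v\in V(G)$ with $d(v)\leq \ell$. Assume $G$ is $\ell$-resilient, so $\Z\sub{\ell}(G)=\Z\sub{0}(G)$, and let $B$ be a minimum $\ell$-leaky forcing set. By Lemma~\ref{degree}, $v\in B$, and $|B|=\Z\sub{0}(G)$. I will show $B\setminus\{v\}$ is a zero forcing set, which contradicts $|B|=\Z\sub{0}(G)$.

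Choose an $\ell$-element leak set $L$ with $L\supseteq N(v)$: when $d(v)<\ell$, additionally include $v$ in $L$ and fill the rest arbitrarily; when $d(v)=\ell$, set $L=N(v)$. Since $B$ is $\ell$-leaky, there is a forcing process $F$ of $B$ respecting $L$. The key observation is that every forcer $x$ in $F$ satisfies $x\notin L\supseteq N(v)$, so $v\notin N(x)$, and hence the validity of each force $x\fs y \in F$ is independent of $v$'s color. When $d(v)<\ell$, the additional fact $v\in L$ guarantees that $v$ itself never forces in $F$; running the forces of $F$ in order from the initial set $B\setminus\{v\}$ remains valid at each step and ends with every vertex except $v$ colored blue, after which any $u\in N(v)$ has $v$ as its unique white neighbor and forces it.

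The main obstacle is the boundary case $d(v)=\ell$, in which $v\notin L$ and $F$ may contain a force $v\fs u$ for some $u\in N(v)$. I argue that $F$ can be chosen so that $v$ performs no force. Here $u\in N(v)\setminus B$, so Lemma~\ref{lplus} gives $d(u)\geq \ell+1$; combined with $|N[v]|=\ell+1$ and $u\notin N(u)$, a short count produces some $w\in N(u)\setminus N[v]$, so $w\neq v$ and $w\notin L$. Theorem~\ref{setofforces} applied with the $(\ell-1)$-leak set $L^*=N(v)\setminus\{u\}$ yields two distinct forcers of $u$ in $\calf_{L^*}(B)$, both lying in $N(u)\setminus N(v)$; at least one of these is not $v$, and Lemma~\ref{switch} then lets me merge this alternative force into a process of $B$ respecting the full leak set $L$ in which $v$ performs no force. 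The simulation argument from the previous paragraph then goes through unchanged to exhibit $B\setminus\{v\}$ as a zero forcing set, completing the contradiction.
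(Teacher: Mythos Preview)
Your overall strategy---pick leaks on $N(v)$ (and on $v$ itself when there is room), take a forcing process respecting those leaks, and then simulate that process from $B\setminus\{v\}$---is exactly the paper's approach, and your treatment of the case $d(v)<\ell$ is clean and correct.

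There is, however, a genuine gap in the boundary case $d(v)=\ell$. You locate, via Theorem~\ref{setofforces}, a forcer $a\notin N[v]$ of $u$ with $a\to u\in\calf_{L^*}(B)$ for $L^*=N(v)\setminus\{u\}$, and then assert that Lemma~\ref{switch} produces a merged process of $B$ ``respecting the full leak set $L$ in which $v$ performs no force.'' The second claim is fine: in the merged process $(F\setminus F(B'))\cup F_a(B')$ (with $B'$ the blue set just before $v\to u$ in $F$), any force by $v$ must land in $N(v)$; all of $N(v)\setminus\{u\}$ already lies in $B'$, and $u$ is forced by $a\ne v$ in $F_a$, so $v$ is idle. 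But the first claim is not justified. The auxiliary process $F_a$ only respects $L^*$, not $L=N(v)$; since $u\notin L^*$, the process $F_a$ may contain a force $u\to z$, and if $z\notin B'$ this force survives into $F_a(B')$. Then the merged process has a forcer $u\in N(v)=L$, and your simulation from $B\setminus\{v\}$ (which relies on ``every forcer $x$ satisfies $x\notin N(v)$'') breaks at that step: with $v$ white, $u$ has the two white neighbors $v$ and $z$ and cannot perform $u\to z$. Nothing in your argument rules this out.

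The paper avoids the merge altogether by choosing the leak set differently in this case: it takes $L=N[v]\setminus\{u\}$, which already contains $v$ (so $v$ is automatically idle and no appeal to Lemma~\ref{switch} is needed) and still has size $d(v)\le\ell$. With that choice the only vertex of $N[v]$ that can force is $u$, and the paper uses Lemma~\ref{lplus} to see that $u$ is forced from outside $N[v]$. You could adopt the same leak set and drop Theorem~\ref{setofforces} and Lemma~\ref{switch} from the argument entirely.
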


\begin{proof}
Let $B$ be a minimum $\ell$-leaky forcing set of $G$ and assume that there exists a vertex $v$ with $\ell$ or fewer neighbors. By Lemma \ref{degree}, $v\in B$. 

If $v$ does not perform a force, set $L=N(v)$ as leaks. Since $G$ is $\ell$-resilient, $B$ forces $V(G)$ despite $L$.  Therefore $B\setminus \{v\}$ is a smaller zero forcing set for $G$, a contradiction. 

If $v$ forces $u$, set $L=N[v]\setminus \{u\}$ as leaks. By Lemma \ref{lplus}, there exists at least one other vertex $w$ not in $N[v]$ that forces $u$ blue. Therefore, $B\setminus\{v\}$ is a smaller zero forcing set, which is also contradiction. 
\end{proof}

One way of reading Proposition \ref{minimum} is that if $G$ is $\ell$-resilient, then $G$ cannot contain a small edge-cut surrounding a single vertex. Rather than considering  leaks that isolate just a single vertex, we can consider leaks that separate whole portions of a graph. In this sense, Theorem \ref{kab} follows the spirit of Proposition \ref{minimum}, and shows that if $G$ is $\ell$-resilient, then $G$ does not contain a small or dense edge-cut.

\begin{thm}\label{kab}
If $G$ is $\ell$-resilient, then $G$ does not have an edge-cut $C$ such that the edges of $C$ induce a subgraph of $K_{a,b}$ with $a+b\leq \ell$.
\end{thm}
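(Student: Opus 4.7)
Assume for contradiction that $G$ is $\ell$-resilient and has an edge cut $C$ whose endpoints partition as $A\subseteq V_1$ and $B\subseteq V_2$, where $V_1,V_2$ are the components of $G-C$ and $|A|+|B|\le \ell$. Let $Z$ be a minimum $\ell$-leaky forcing set, so $|Z|=\Z\sub{0}(G)$, and set $L=A\cup B$ as the leak set (padding to exactly $\ell$ vertices if $|A\cup B|<\ell$).

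The first step is to decouple the two sides of the cut. Since $Z$ is an $\ell$-leaky forcing set, some forcing process $F$ of $Z$ succeeds despite $L$. Every cross edge has both endpoints in $A\cup B\subseteq L$ and leaks never perform forces, while every vertex of $V_1\setminus A$ has all of its neighbors in $V_1$ (and analogously for $V_2\setminus B$). Thus $F$ decomposes as $F_1\cup F_2$ where each $F_i$ is a valid zero-forcing process on $G[V_i]$ starting from $Z_i:=Z\cap V_i$. In particular each $Z_i$ is a zero forcing set of $G[V_i]$, giving
\[\Z\sub{0}(G)=|Z_1|+|Z_2|\ge \Z\sub{0}(G[V_1])+\Z\sub{0}(G[V_2]).\]

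To derive the contradiction I would exhibit a zero forcing set of $G$ of size at most $\Z\sub{0}(G[V_1])+\Z\sub{0}(G[V_2])-1$. Fix a cut edge $ab\in C$ with $a\in A$ and $b\in B$. By Theorem \ref{reverse} and appropriate reversals, choose minimum zero forcing sets $M_1$ of $G[V_1]$ with $a\in M_1$ and $M_2$ of $G[V_2]$ with $b\in M_2$, together with forcing processes in which $a$ and $b$ each occupy a trivial (length-one) chain, so that neither performs a force. I then claim $(M_1\cup M_2)\setminus\{b\}$ is a zero forcing set of $G$: first run the $G[V_1]$-process on $M_1$ inside $G$, which completes because the one force that $a$ would have performed never arises; after $V_1$ is fully blue, $a$ has $b$ as its unique white neighbor in $G$ (since $B\setminus\{b\}\subseteq M_2$ is blue), so $a$ forces $b$ via the cut edge; finally the $G[V_2]$-process from $M_2$ completes $V_2$. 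This produces a zero forcing set of size $\Z\sub{0}(G[V_1])+\Z\sub{0}(G[V_2])-1$, contradicting the inequality above.

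The main obstacle is justifying (i) the existence of minimum zero forcing sets $M_1,M_2$ whose processes leave $a$ and $b$ as trivial chains, and (ii) that no other intra-$V_i$ force becomes invalid in $G$ because a cross-neighbor is temporarily white. Issue (ii) is real: any $A$-vertex adjacent to $b$ other than $a$ may have its intra-$V_1$ force blocked until $b$ is blued. To handle this, I would splice the $V_1$- and $V_2$-sub-processes together via Lemma \ref{switch} so that the cross-edge force $a\to b$ occurs at the exact moment needed, and would use the minimum-degree slack $\delta(G)\ge \ell+1$ from Proposition \ref{minimum}, together with $|A|+|B|\le \ell$, to guarantee that $G[V_1]$ and $G[V_2]$ have enough forcing redundancy for the required ordering to exist.
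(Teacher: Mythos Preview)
Your decoupling step is correct and matches the paper exactly: placing the leaks on $A\cup B$ forces the process $F$ to split into independent processes $F_1,F_2$ on the two sides, so each $Z_i=Z\cap V_i$ is a zero forcing set of $G[V_i]$.

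The construction step, however, has genuine gaps that your proposed patches do not close. First, there is no reason a \emph{minimum} zero forcing set of $G[V_1]$ must contain a prescribed vertex $a$, let alone have $a$ sit on a trivial chain: take $G[V_1]$ a path with $a$ an interior vertex. Theorem~\ref{reverse} only lets you reverse an existing process; it does not manufacture a minimum set through a given vertex. Lemma~\ref{reversal} would do this, but it requires an $\ell$-leaky forcing set of $G[V_1]$, which you do not have. Second, your claim that ``$B\setminus\{b\}\subseteq M_2$'' is unjustified: you only asked that $b\in M_2$, and there is no reason the remaining cut vertices on side~2 lie in $M_2$. Without this, $a$ can have several white cross-neighbours after $V_1$ is blue, and the single force $a\to b$ is unavailable. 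Your appeal to $\delta(G)\ge\ell+1$ and Lemma~\ref{switch} is not a proof of either point.

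The paper sidesteps both obstacles with one move: it never passes to minimum zero forcing sets of the sides. Instead it keeps $Z_2=Z\cap V(G_2)$ on one side and replaces $Z_1$ by its \emph{reversal} $R_{F_1}(Z_1)$ on the other. Because every vertex of $A$ was a leak in $F_1$, each lies at the end of its chain, so $A\subseteq R_{F_1}(Z_1)$ automatically. Now drop a single $x\in A$ and take $(R_{F_1}(Z_1)\setminus\{x\})\cup Z_2$. The $Z_2$-process colours $V(G_2)$ in $G$ (its forcing vertices all lie in $V_2\setminus B$, so they see no cross-edges); then any $b\in B$ adjacent to $x$ has $x$ as its unique white neighbour (its other cross-neighbours are in $A\setminus\{x\}\subseteq R_{F_1}(Z_1)$) and forces $x$; finally the reversed $F_1$ finishes $V(G_1)$, and any force there performed by an $A$-vertex is valid because all of $B$ is already blue. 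Since $|R_{F_1}(Z_1)|=|Z_1|$, this set has size $|Z|-1$, which is the contradiction. The point is that using the reversal of the \emph{actual} process $F_1$ --- rather than an unrelated minimum set --- gives you containment of all of $A$ for free and makes issues (i) and (ii) disappear.
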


\begin{proof}
Let $B$ be a minimum $\ell$-leaky forcing set, and let $C$ be a minimum edge-cut such that the edges of $C$ induce a subgraph of $K_{a,b}.$ This implies that $|V(C)|\leq \ell$. Since $C$ is a minimum edge-cut, there exists exactly two components $G_1,G_2$ in $G-C$. Let $L=V(C)$ be a set of leaks. Since $B$ is an $\ell$-leaky forcing set, there exists forcing process $F$ of $B$ such that $B$ forces $V(G)$ despite $L$. Furthermore, $B'=B\cap V(G_1)$ is a zero forcing set of $G_1$ with forcing process $F'=F(V(G_2))$. Therefore, $R_{F'}(B')$ is a zero forcing set of $G_1$ that contains $V(C)\cap V(G_1)$ by Lemma \ref{reversal}.  Let $x\in V(C)\cap V(G_1)$. Now $(B\cap V(G_2))\cup (R_{F'}(B')\setminus\{x\})$ is a zero forcing set of $G$.
Since $|R_{F'}(B')|=|B'|=|B\cap V(G_1)|$,
\[|(B\cap V(G_2))\cup (R_{F'}(B')\setminus\{x\})|=|(B\cap V(G_2))|+|B\cap V(G_1)|-1=|B|-1.\]
Thus, $G$ is not $\ell$-resilient.
\end{proof}

Notice that a $K_{\ell+2}$ is an $\ell$-resilient graph with an edge-cut that induces a subgraph of $K_{1,\ell+1}$. Therefore, Theorem \ref{kab} can potentially be improved.

In \cite{BBFHHSVV}, the authors considered whether there is a graph $G$ with a vertex $v$ such that $v$ is in every zero forcing set of $G$. It was shown that no such graph with non-trivial components  exists.

\begin{thm}\cite{BBFHHSVV}
If $G$ is a connected graph of order greater than one, then no vertex is in every minimum zero forcing set of $G.$
\end{thm}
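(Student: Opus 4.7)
Suppose for contradiction that some vertex $v$ lies in every minimum zero forcing set of $G$, and fix a minimum zero forcing set $B$ with $v \in B$ together with a forcing process $F$ of $B$. By Theorem \ref{reverse}, $R_F(B)$ is a zero forcing set of the same size as $B$, hence a minimum zero forcing set, so our hypothesis gives $v \in R_F(B)$. Since the reversal collects exactly those vertices that perform no force in $F$, and $v$ is already a starter (as $v \in B$), the forcing chain of $v$ in $F$ must be the singleton $\{v\}$. Because $G$ is connected and $|V(G)|\ge 2$, the vertex $v$ has at least one neighbor.

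The main case is when $v$ has a white neighbor at some step of $F$, i.e., $N(v)\not\subseteq B$. The white neighbors of $v$ disappear one at a time, so there is a last moment $t^*$ at which $v$'s unique remaining white neighbor $w$ is colored, via some force $x\fs w\in F$; because $v$ performs no force, $x\neq v$. I would then form $F' = (F\setminus\{x\fs w\})\cup\{v\fs w\}$ with the same chronological order as $F$. The forces before $t^*$ are unchanged, the substitute $v\fs w$ at $t^*$ is legal because $w$ is $v$'s only white neighbor at that instant, and the forces after $t^*$ act on exactly the same blue set as in $F$, so validity is preserved. In $F'$ the vertex $v$ performs a force, giving $v\notin R_{F'}(B)$; Theorem \ref{reverse} then supplies a minimum zero forcing set of $G$ that excludes $v$, the sought contradiction.

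The edge case is $N(v)\subseteq B$, when no such $t^*$ exists, and I would dispose of it by looking at the chains rooted at $N(v)$ in $F$. If some $u\in N(v)$ is not a singleton chain in $F$, then $u\notin R_F(B)$, so $N(v)\not\subseteq R_F(B)$; applying the main-case argument to the minimum zero forcing set $R_F(B)$ with the reversed process (in which $v$ is still a singleton, but now sees $u$ as a white neighbor) yields the contradiction. If instead every $u\in N(v)$ is a singleton in $F$, then none of the forcers of $F$ are adjacent to $v$, so $F$ remains valid when run from $B\setminus\{v\}$ and colors $V(G)\setminus\{v\}$; at that point every $u\in N(v)$ has $v$ as its unique white neighbor, so $u\fs v$ is a legal final step, and $B\setminus\{v\}$ is a zero forcing set of size $|B|-1$, contradicting the minimality of $B$.

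The main obstacle is the edge case $N(v)\subseteq B$: the modification trick in the main case needs a moment when $v$ has exactly one white neighbor, which is absent here, and the fix requires either passing to $R_F(B)$ (to reintroduce a white neighbor of $v$) or exploiting that $F$ is decoupled from $v$ to undercut minimality directly. Once this dichotomy is set up, each verification follows routinely from Theorem \ref{reverse} and the definition of a forcing process.
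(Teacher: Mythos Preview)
The paper does not supply its own proof of this statement; it is quoted from \cite{BBFHHSVV} and used as a black box, so there is no in-paper argument to compare your attempt against.

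That said, your proof is correct. The key observation---that $v\in B\cap R_F(B)$ forces $v$'s forcing chain in $F$ to be the singleton $\{v\}$---sets up the main case cleanly, and the substitution $F'=(F\setminus\{x\fs w\})\cup\{v\fs w\}$ is valid since the blue set immediately after step $t^*$ is identical in $F$ and $F'$, so all later forces remain legal. In the edge case $N(v)\subseteq B$ your dichotomy is sound: if some neighbor $u$ performs a force in $F$ then $u\notin R_F(B)$, so $(R_F(B),F^{-1})$ is a minimum zero forcing set with process in which $v$ is still a singleton but now has the white neighbor $u$, and the main case applies; if instead every $u\in N(v)$ is a singleton in $F$, then no force of $F$ originates at a vertex adjacent to $v$, so $F$ run from $B\setminus\{v\}$ still colors $V(G)\setminus\{v\}$, after which any neighbor forces $v$, contradicting minimality of $B$.
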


\noindent Notice that by changing the question to reflect $\ell$-leaky forcing, the answer is affirmative by either Lemma \ref{degree} or Proposition \ref{max}. 

Furthermore, \cite{BBFHHSVV} considered whether there is a graph with a unique minimum zero forcing set.  Since the reversal of a minimum zero forcing set is also a minimum zero forcing set by Theorem \ref{reverse}, no such non-empty graph exists. 

\begin{cor}\cite{BBFHHSVV}
    If $G$ is a connected graph of order greater than one, then $G$ does not have a unique minimum zero forcing set.
\end{cor}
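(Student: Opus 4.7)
The plan is to produce, from any single minimum zero forcing set $B$ of $G$, a second distinct minimum zero forcing set, thereby ruling out uniqueness. The construction is immediate from the reversal operation: fix any zero forcing process $F$ of $B$ and consider $R_F(B)$. By Theorem \ref{reverse}, $R_F(B)$ is a zero forcing set, and since the maximal forcing chains of $F$ partition $V(G)$ with each chain contributing exactly one start vertex to $B$ and one terminal vertex to $R_F(B)$, we get $|R_F(B)|=|B|$. Hence $R_F(B)$ is also a minimum zero forcing set.

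The remaining and only substantive step is to verify that $R_F(B)\neq B$; this is where the hypothesis that $G$ is connected with $|V(G)|\geq 2$ enters. First I would observe that $\Z(G)<|V(G)|$: for any vertex $v$, the set $V(G)\setminus\{v\}$ is a zero forcing set because any neighbor of $v$ (which exists by connectivity) sits in this set with $v$ as its unique white neighbor. Consequently $B\neq V(G)$, so there exists $w\in V(G)\setminus B$.

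Now $w$ lies on some maximal forcing chain $x_1\fs x_2\fs\cdots\fs x_k$ of $F$, with $x_1\in B$ and $x_k\in R_F(B)$. Since $w\notin B$ appears on this chain, we have $k\geq 2$, and every vertex $x_2,\dots,x_k$ was white at the start of the process (it was forced in $F$), so in particular $x_k\notin B$. Thus $x_k\in R_F(B)\setminus B$, confirming $R_F(B)\neq B$ and producing the desired second minimum zero forcing set.

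There is no real obstacle here beyond careful bookkeeping with forcing chains: Theorem \ref{reverse} supplies that $R_F(B)$ is a minimum zero forcing set, and the connectivity hypothesis supplies a nontrivial chain in any process, which is exactly what is needed to separate $R_F(B)$ from $B$.
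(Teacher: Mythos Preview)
Your proof is correct and follows exactly the approach the paper indicates: the sentence preceding the corollary notes that the reversal of a minimum zero forcing set is again a minimum zero forcing set by Theorem~\ref{reverse}, and you have simply filled in the details---in particular the verification that $R_F(B)\neq B$, which the paper leaves implicit but which is indeed where the connectivity hypothesis is used.
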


\noindent However, there do exist graphs with unique minimum $\ell$-leaky forcing sets with $\ell\geq 1$. To see this take the complete graph $K_{\ell+2}$ with $\ell$ leaves at every vertex. Let $B$ be the set of leaves. By Lemma \ref{degree}, $B$ has to be in every  minimum $\ell$-leaky forcing set, and $B$ is an $\ell$-leaky forcing set. See Figure \ref{fig:unique} for an example when $\ell=1$.
 
  \begin{figure}[H]
        \centering{
        \includegraphics[width=0.2\textwidth]{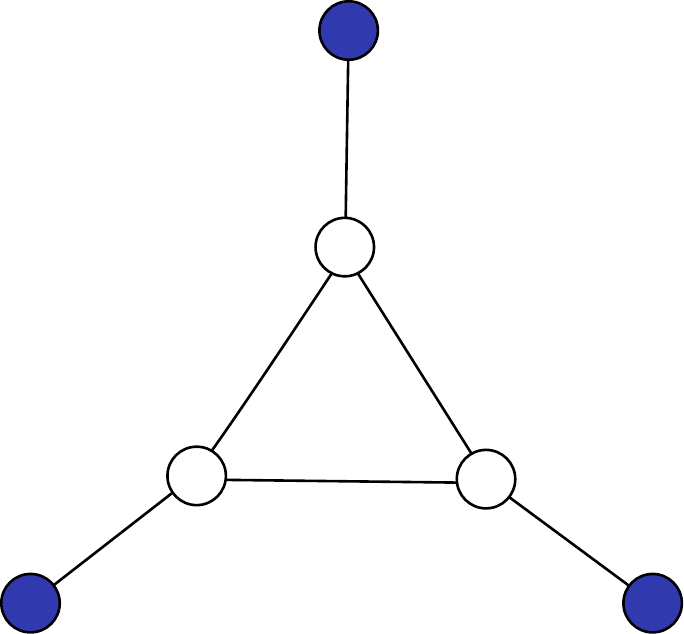}
        \caption{An example of a graph with a unique $1$-leaky forcing set.}
        \label{fig:unique}}
    \end{figure}

 The following theorem from \cite{EHHLR} shows how removing an edge from a graph impacts the zero forcing number.
 
\begin{thm}\label{123}\cite{EHHLR}
For every graph $G$ and every edge $e$ in $G$, 
$$-1\leq \Z(G)-\Z(G-e)\leq 1.$$
\end{thm}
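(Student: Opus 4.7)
The plan is to establish the two directions $\Z(G)\le \Z(G-e)+1$ and $\Z(G-e)\le \Z(G)+1$ separately. The central observation in both cases is that if $e=uv$, then for any forcing process $F$ on either $G$ or $G-e$, the only forces in $F$ whose validity is affected by the presence or absence of $e$ are the forces originating at $u$ or at $v$. Adding the later-forced endpoint of $e$ to the initial blue set neutralizes those affected forces, so the zero forcing number can change by at most one.

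For $\Z(G)\le \Z(G-e)+1$, I would begin with a minimum zero forcing set $B'$ of $G-e$ and a forcing process $F'$ of $B'$ in $G-e$. Let $t_u$ and $t_v$ be the times at which $u$ and $v$ become blue during $F'$ (with $0$ meaning membership in $B'$), and, relabeling if necessary, assume $t_u\le t_v$. The claim is that $B^{*}:=B'\cup\{v\}$ is a zero forcing set of $G$, which immediately gives $\Z(G)\le |B^{*}|\le \Z(G-e)+1$. To verify this, simulate $F'$ in $G$ starting from $B^{*}$, skipping any force that targets $v$. By induction on the step number, the blue set in the simulation contains $S_t\cup\{v\}$, where $S_t$ is the blue set after $t$ forces of $F'$ in $G-e$. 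For a force $x\fs w$ of $F'$, three cases arise: if $x\notin\{u,v\}$ then the $G$-neighborhood of $x$ equals its $(G-e)$-neighborhood and the force remains valid; if $x=u$ then the only new $G$-neighbor of $u$ is $v$, which lies in $B^{*}$ and is always blue; if $x=v$ then the only new $G$-neighbor of $v$ is $u$, which was forced at time $t_u\le t_v$ and is therefore blue well before $v$ performs any force.

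The reverse bound $\Z(G-e)\le \Z(G)+1$ is analogous: take a minimum zero forcing set $B$ of $G$ and a forcing process $F$ of $B$ in $G$. If neither $u\fs v$ nor $v\fs u$ lies in $F$, then at the time of any force $x\fs w$ of $F$ with $x\in\{u,v\}$ the other endpoint of $e$ is already blue (since $x$'s only white $G$-neighbor is $w$, and $w\ne u,v$), so deleting $e$ only removes an already-blue neighbor from $x$'s neighborhood; hence $F$ is a valid forcing process of $B$ in $G-e$ and $\Z(G-e)\le |B|$. Otherwise, relabeling if necessary, $u\fs v\in F$, and I claim $B\cup\{v\}$ is a zero forcing set of $G-e$: simulate $F$ in $G-e$, skipping the now-impossible force $u\fs v$, and the same case check shows every remaining force remains valid.

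The main obstacle is the bookkeeping in the first direction: one must ensure that when $v$ eventually performs a force during the simulation, $u$ has already turned blue. The ordering $t_u\le t_v$ makes this automatic, but it also highlights that one cannot simply add an arbitrary endpoint of $e$---the later-forced endpoint is required. Once this is noted, the remainder of the argument is a routine case analysis on whether the forcing vertex equals $u$ or $v$, combined with induction on the step number.
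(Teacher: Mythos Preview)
Your argument is correct. The two directions are handled cleanly: in each case you identify which endpoint of $e$ to add to the initial blue set (the later-forced one), and the inductive simulation with the invariant ``blue set $=S_t\cup\{v\}$'' verifies that every force of the original process remains valid in the modified graph. The only delicate point---that when $v$ performs a force its new neighbor $u$ is already blue---is exactly what the ordering $t_u\le t_v$ (respectively, the assumption $u\fs v\in F$) guarantees, and you flag this explicitly.

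Note, however, that the paper does \emph{not} give its own proof of this statement: Theorem~\ref{123} is quoted from \cite{EHHLR} and used as background for the $1$-leaky analogue (Theorem~\ref{delete}). So there is no in-paper proof to compare against. Your argument is essentially the standard one from \cite{EHHLR}; the idea of adding a single endpoint of $e$ and replaying the forcing process is exactly how that result is usually established.
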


Proposition \ref{delete} investigates how edge-deletion affects the $1$-leaky forcing number.

\begin{thm}\label{delete}
For every graph $G$ and every edge $e$ in $G$,

$$-2 \le \Z\sub1(G) - \Z\sub1(G-e).$$

\end{thm}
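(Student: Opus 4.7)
The plan is to show $\Z_{(1)}(G-e) \le \Z_{(1)}(G) + 2$, which rearranges to the desired inequality. Let $B$ be a minimum $1$-leaky forcing set of $G$, write $e = uv$, and set $B' = B \cup \{u,v\}$, so $|B'| \le |B| + 2$. I will prove $B'$ is a $1$-leaky forcing set of $G - e$ by verifying the double-forcer condition of Theorem \ref{double}.

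First I would establish a \emph{transfer lemma}: for any forcing process $F$ of $B$ in $G$, writing $F'$ for the subsequence of $F$ obtained by deleting every force whose target lies in $\{u,v\}$, the sequence $F'$ is a valid forcing process of $B'$ in $G - e$. The proof is a short induction showing that if $S_j$ denotes the blue set in $G$ after the first $j$ forces of $F$ (starting from $B$), then the blue set in $G - e$ after processing the corresponding portion of $F'$ (starting from $B'$) equals $S_j \cup \{u,v\}$. Forces in $F$ whose target is $u$ or $v$ are simply skipped, which preserves the invariant since the target is already blue in $B'$. For a retained force $a \to b$ with $b \notin \{u,v\}$, the only delicate case is $a \in \{u,v\}$, say $a = u$: the white neighbors of $u$ in $G - e$ relative to $S_{j-1} \cup \{u,v\}$ are $(N_G(u) \setminus \{v\}) \setminus (S_{j-1} \cup \{u,v\}) = N_G(u) \setminus (S_{j-1} \cup \{u,v\})$, which coincides with $N_G(u) \setminus S_{j-1} = \{b\}$ because $v \in S_{j-1} \cup \{u,v\}$ anyway. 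Thus the force remains valid in $G - e$, and after applying it the invariant holds with $S_j = S_{j-1} \cup \{b\}$.

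Second, I would deploy Theorem \ref{double} in both directions. By hypothesis $B$ is a $1$-leaky forcing set of $G$, so for every $w \in V(G) \setminus B$ there exist distinct vertices $x,y$ with $x\fs w, y \fs w \in \mathcal F_G(B)$, witnessed by forcing processes $F_x, F_y$ of $B$ in $G$. Any $w \in V(G) \setminus B'$ satisfies $w \notin B$ and $w \notin \{u,v\}$, so such $x,y$ exist and the forces $x \fs w, y \fs w$ have target $w \neq u,v$. The transfer lemma turns $F_x, F_y$ into forcing processes $F_x', F_y'$ of $B'$ in $G - e$, and both $x \fs w$ and $y \fs w$ survive the deletion because their targets are not in $\{u,v\}$. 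Hence $x \fs w, y \fs w \in \mathcal F_{G-e}(B')$, so by Theorem \ref{double} the set $B'$ is a $1$-leaky forcing set of $G - e$, giving $\Z_{(1)}(G-e) \le |B| + 2 = \Z_{(1)}(G) + 2$.

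The main obstacle is the transfer lemma, specifically the verification that forces with source $u$ or $v$ remain valid in $G - e$ when the initial blue set is enlarged to $B'$. Intuitively one is trading an ``extra edge'' $uv$ for the initial colorings of $u$ and $v$: since both endpoints of $e$ are always blue under $B'$, the edge $uv$ was never useful in distinguishing a unique white neighbor, so its removal does not disrupt any force originating at $u$ or $v$. Once this invariant is in place, everything else reduces to applying Theorem \ref{double}.
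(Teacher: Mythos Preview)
Your proof is correct and is in fact cleaner than the paper's argument. Both proofs ultimately feed into Theorem~\ref{double}, but the routes differ.

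The paper does a case analysis on which of the endpoints $u,v$ of $e$ lie in the minimum $1$-leaky forcing set $B$, and in each case shows $\Z\sub1(G-e)\le\Z\sub1(G)+c$ for some $c\in\{0,1,2\}$ depending on the case. This finer breakdown is what Figure~\ref{fig:delete} is illustrating: the paper records not just the worst case $c=2$ but also when $c=0$ or $c=1$ suffices. Your argument instead handles every case uniformly by always passing to $B'=B\cup\{u,v\}$ and proving a single transfer lemma (any forcing process of $B$ in $G$, with forces targeting $u$ or $v$ deleted, becomes a forcing process of $B'$ in $G-e$). You lose the refined case information but gain a shorter, more robust proof of the stated inequality. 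The transfer lemma also makes explicit a step the paper leaves somewhat implicit in its subcases (e.g.\ in Case~3, Subcase~3 the paper writes $F'=F\setminus\{v\fs w,\,w\fs v\}$, but strictly one should delete \emph{all} forces targeting $v$ or $w$, as you do).

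One small wording issue: in the delicate case $a=u$, your sentence ``coincides with $N_G(u)\setminus S_{j-1}=\{b\}$ because $v\in S_{j-1}\cup\{u,v\}$ anyway'' is not quite the right justification for the equality $N_G(u)\setminus(S_{j-1}\cup\{u,v\})=N_G(u)\setminus S_{j-1}$. The clean way is simply $(N_G(u)\setminus S_{j-1})\setminus\{u,v\}=\{b\}\setminus\{u,v\}=\{b\}$, using $b\notin\{u,v\}$. The conclusion is correct either way.
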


\begin{proof} Let $B$ be a minimum $1$-leaky forcing set and $e = vw$.

\textbf{Case 1:} Assume both $v,w\in B$. Notice that $\calf (B)$ is the same given $G$ and $G-e$, since $v\fs w$ and $w\fs v$ are not in $\calf(B)$ given $G$.  Therefore, $B$ is a $1$-leaky forcing set for $G-e$ by Theorem \ref{double}. Thus, $\Z\sub1(G-e)\leq \Z\sub1(G)$.

\textbf{Case 2:} Without loss of generality, assume $v\in B$ and $w\notin B$. Therefore, there exists $x_1,y_1 \in V(G)$ such that $x_1\fs w, y_1\fs w \in \mathcal{F}(B)$ with $x_1\neq y_1$ by Theorem \ref{double}. There are two subcases to consider.

\textbf{Subcase 1:} Assume $v\notin \{x_1,y_1\}.$ Then $B$ is a $1$-leaky forcing set for $G-e$ since every vertex in $V(G-e)\setminus B$ can still be forced in two ways. Thus, $\Z\sub1(G-e)\leq \Z\sub1(G)$. 

\textbf{Subcase 2:} Without loss of generality, assume $v=x_1.$ If $F$ is a forcing process of $B$ in $G$, then $F' = F\setminus \{v\fs w\}$ is a forcing process of $B\cup \{w\}$ in $G-e$. Then $B'=B\cup \{w\}$ is a $1$-leaky forcing set for $G-e$. Thus, $\Z\sub1(G-e)\leq \Z\sub1(G)+1$.

\textbf{Case 3:} Assume $v,w \notin B$. Therefore, there exists $x_1,y_1,x_2,y_2 \in V(G)$ such that $x_1\fs w, y_1\fs w,x_2\fs v, y_2\fs v \in \mathcal{F}(B)$ with $x_1\neq y_1$ and $x_2\neq y_2$ by Theorem \ref{double}. There are three subcases to consider.

\textbf{Subcase 1:} Assume $v\notin \{x_1,y_1\}$ and $w\notin \{x_2,y_2\}.$ Then $B$ is a $1$-leaky forcing set for $G-e$ since $v$ and $w$ (or any other white vertex) can be forced in two ways without $e$. Thus, $\Z\sub1(G-e)\leq \Z\sub1(G)$. 

\textbf{Subcase 2:} Without loss of generality, assume $v=x_1$ and $w\notin \{x_2,y_2\}.$ By deleting $e$, $w$ can only be forced in one way using $B$. Therefore, $B'=B\cup \{w\}$ is a $1$-leaky forcing set for $G-e$ by a similar argument to subcase $2$ of case $2$. Thus, $\Z\sub1(G-e)\leq \Z\sub1(G)+1$. 

\textbf{Subcase 3:} Without loss of generality, assume $v=x_1$ and $w=x_2$. By deleting $e$, both $w$ and $v$ might only be forced in one way. If $F$ is a forcing process of $B$ in $G$, then $F' = F\setminus \{v\fs w, w\fs v\}$ is a forcing process of $B'=B\cup \{v,w\}$ in $G-e$.
Therefore, $B'$ is a $1$-leaky forcing set for $G-e$. Thus, $\Z\sub1(G-e)\leq \Z\sub1(G)+2$. 
\end{proof}

 Figure \ref{fig:delete} shows that the inequalities in the proof of Theorem \ref{delete} can be tight. 

 \begin{figure}[H]
        \centering{
        \includegraphics[width=0.5\textwidth]{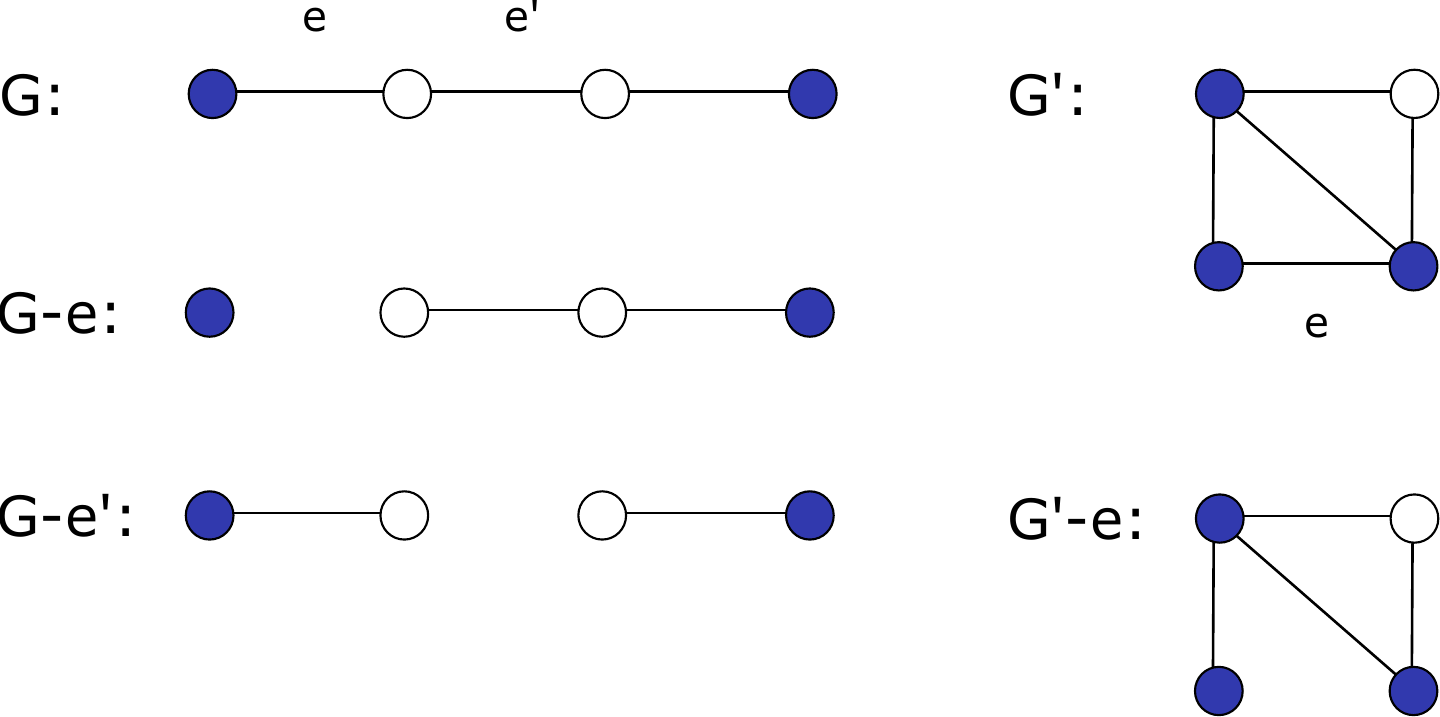}

        \caption{Examples where deleting an edge increases the $1$-leaky number by zero ($G'$ to $G'-e$), one ($G$ to $G-e$), and two ($G$ to $G-e'$).
        \label{fig:delete}}}
\end{figure}
    
The examples in Figure \ref{fig4} give some indication that $\Z\sub1(G) - \Z\sub1(G-e)$ is bounded above by $2$.

 \begin{figure}[H]
        \centering{
        \includegraphics[width=0.45\textwidth]{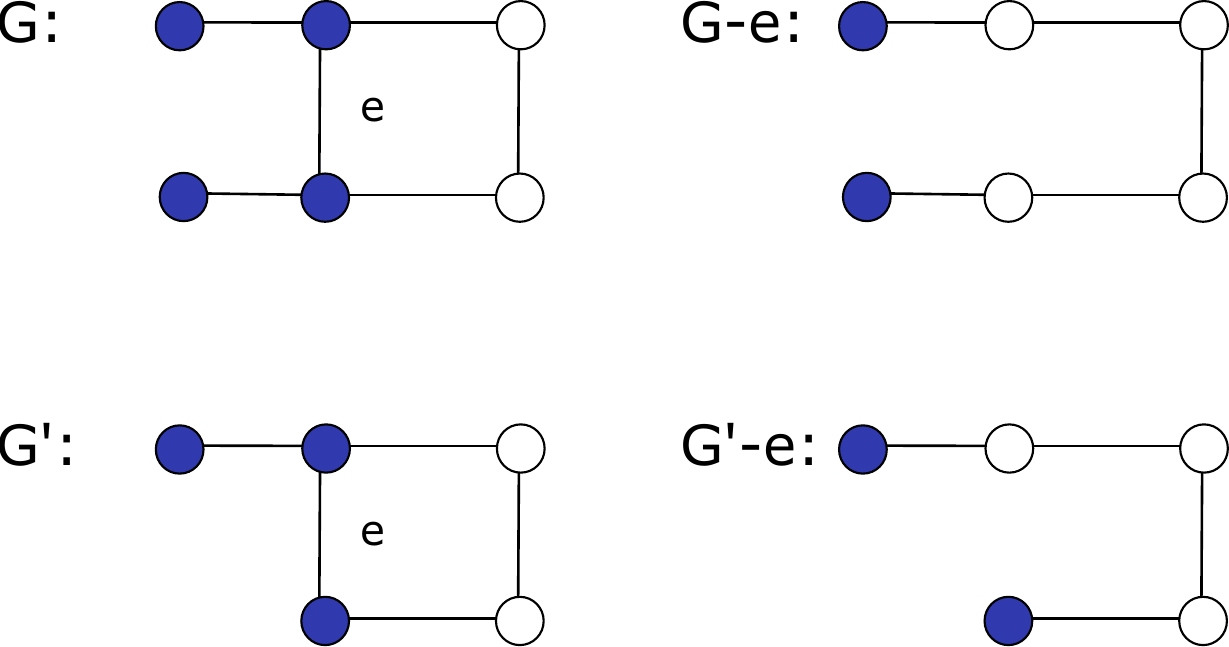}

        \caption{Examples where deleting an edge decreases the $1$-leaky forcing number by two ($G$ to $G-e$), and one ($G'$ to $G'-e$).
        \label{fig4}}}
\end{figure}

Furthermore, it has been shown the behavior of vertex deletion can be used to extract some information about zero forcing sets.

\begin{thm}\label{deletevertex}\cite{EHHLR}
Let $G$ be a graph and $v\in V(G)$. If $\Z(G)-\Z(G-v)=-1,$ then $v\notin B$ for all minimum zero forcing sets $B$ of $G$.
\end{thm}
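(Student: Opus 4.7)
The plan is to prove the contrapositive: assuming $v$ belongs to some minimum zero forcing set $B$ of $G$, I would construct a zero forcing set of $G-v$ of size at most $\Z(G)$, which directly contradicts the hypothesis $\Z(G-v)=\Z(G)+1$. I would split into two cases according to whether $v$ ever performs a force for this particular $B$.

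First, suppose $B$ admits some forcing process $F$ in which $v$ performs no force. Then every force in $F$ has both endpoints in $V(G)\setminus\{v\}$ (the target is never $v$ because $v\in B$, and the source is never $v$ by assumption). I would argue that $B\setminus\{v\}$ is a zero forcing set of $G-v$ by reusing the forces in $F$: since $v$ starts blue and stays blue throughout $F$, deleting $v$ from the graph never turns a white neighbor white again, so each force $u\fs u'\in F$ remains valid in $G-v$ at the same time step. This gives $\Z(G-v)\leq |B|-1=\Z(G)-1$, contradicting the hypothesis.

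Otherwise, every forcing process of $B$ contains a force performed by $v$. Fix any such process $F$ with $v\fs w\in F$ and set $B'=(B\setminus\{v\})\cup\{w\}$, so $|B'|=|B|$. I would show that $F\setminus\{v\fs w\}$ is a valid forcing process for $B'$ in $G-v$. The key inductive claim is that at every time step the blue set under this modified process in $G-v$ equals the blue set in $G$ under $F$ minus $v$ union $\{w\}$; a direct check then confirms each remaining force $u\fs u'$ is valid in $G-v$ because $u'\neq v$ (as $v\in B$) and $u'\neq w$ (since $w$ is forced only by $v$ in $F$). This gives $\Z(G-v)\leq |B|=\Z(G)$, again contradicting $\Z(G-v)=\Z(G)+1$.

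The main technical obstacle is the bookkeeping in the second case: one has to carefully verify that promoting $w$ into the initial blue set, while simultaneously removing $v$ from both the initial set and the graph, does not break any later force in $F$. The two observations that make this clean are that deleting a permanently-blue vertex from the graph can only shrink the white-neighbor sets of remaining vertices, and that the target of every force in $F$ different from $v\fs w$ is disjoint from $\{v,w\}$; together these ensure each such force fires correctly in $G-v$ at its original time step.
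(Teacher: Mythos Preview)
The paper does not prove this theorem; it is quoted from \cite{EHHLR} as a known result and used only as input to Corollary~\ref{minusone}. So there is no in-paper proof to compare against.

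Your argument is correct and is essentially the standard one. One minor simplification: the case split is not really needed, since case~2 alone already gives the conclusion. For any forcing process $F$ of $B$, set $B'=(B\setminus\{v\})\cup\{w:v\fs w\in F\}$; then $|B'|\le|B|$ (with equality exactly when $v$ performs a force), and your verification in the second case shows $B'$ forces $G-v$, yielding $\Z(G-v)\le\Z(G)$ in all cases. Your two key observations---that deleting a permanently-blue vertex and pre-coloring $w$ can only shrink white-neighbor sets, and that every force in $F\setminus\{v\fs w\}$ has target distinct from both $v$ and $w$---are exactly what make the transferred process valid in $G-v$.
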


Recall that by Lemma \ref{reversal}, if $G$ is an $\ell$-resilient graph, any $\ell$ vertices are in a minimum zero forcing set.
The following corollary, relates the behavior of vertex deletion with respect to the zero forcing number and $\ell$-resilience. Furthermore, Corollary \ref{minusone} gives a simple way of determining if a graph is not $\ell$-resilient.

\begin{cor}\label{minusone}
If $G$ is $\ell$-resilient, then $\Z(G)-\Z(G-v)\neq -1$ for all $v\in V(G).$ Equivalently, if $\Z(G)-\Z(G-v)= -1$ for some $v\in V(G)$, then $G$ is not $\ell$-resilient.
\end{cor}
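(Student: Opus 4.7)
The plan is to prove the contrapositive: assuming that there exists $v\in V(G)$ with $\Z(G)-\Z(G-v)=-1$, I will show $G$ is not $\ell$-resilient (for any meaningful $\ell\geq 1$; the statement is vacuous or false in the trivial case $\ell=0$, e.g.\ $P_3$ with its center deleted, so I would first note that one reads the corollary with $\ell\geq 1$). The strategy is to combine two prior results in opposite directions: Theorem \ref{deletevertex} forces $v$ to be \emph{outside} every minimum zero forcing set of $G$, while Lemma \ref{reversal} (applied to a minimum $1$-leaky forcing set) produces a minimum zero forcing set that \emph{contains} $v$. These two facts are incompatible.

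The key steps in order are as follows. First, invoke Theorem \ref{deletevertex} directly on the hypothesis $\Z(G)-\Z(G-v)=-1$ to conclude $v\notin B$ for every minimum zero forcing set $B$ of $G$. Second, suppose toward contradiction that $G$ is $\ell$-resilient with $\ell\geq 1$. By Proposition \ref{ineq}, the chain $\Z\sub 0(G)\leq \Z\sub 1(G)\leq \Z\sub \ell(G)$ collapses, so $\Z\sub 1(G)=\Z\sub 0(G)=\Z(G)$; in particular there is a $1$-leaky forcing set $B'$ of size $\Z(G)$. Third, apply Lemma \ref{reversal} to $B'$ with the single ``leak'' being $\{v\}$: this produces a zero forcing set of $G$ of size $|B'|=\Z(G)$ which contains $v$. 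That set is a minimum zero forcing set of $G$ containing $v$, contradicting the conclusion of the first step. Hence no such $v$ can exist, proving the corollary.

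There is not really a hard technical obstacle here; the proof is essentially a two-line deduction once the right machinery is assembled. The main thing to be careful about is the quiet reduction from ``$\ell$-resilient'' to ``$1$-resilient,'' which is exactly where Proposition \ref{ineq} is used, and the correct invocation of Lemma \ref{reversal}, which requires recognizing that the ``any $\ell$ vertices'' statement in its conclusion is used with the specific choice $L=\{v\}$. The equivalent reformulation claimed in the corollary statement is then immediate by contraposition.
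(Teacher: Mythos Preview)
Your proof is correct and follows essentially the same approach as the paper: both use Proposition~\ref{ineq} to reduce $\ell$-resilience to $1$-resilience, invoke Lemma~\ref{reversal} to place the vertex $v$ inside some minimum zero forcing set, and then appeal to Theorem~\ref{deletevertex} for the contradiction. The paper runs the argument in the direct direction while you frame it as a contrapositive, but the content is identical; your explicit remark that the case $\ell=0$ must be excluded is a detail the paper leaves implicit.
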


\begin{proof}
Let $G$ be an $\ell$-resilient graph. By Proposition \ref{ineq}, $G$ is a $1$-resilient graph and by Lemma \ref{reversal}, every vertex is in some minimum zero forcing set. Hence, by the contrapositive of Theorem \ref{deletevertex}, $\Z(G)-\Z(G-v)\neq -1$ for all $v\in V(G).$
\end{proof}

The following example shows that unlike edge deletion, given a vertex $v$ in a graph $G$, $\Z\sub 1(G)-\Z\sub 1(G-v)$ cannot be bounded below. Consider the graph $G$ constructed by identifying an endpoint of $k$ paths of order  $n\geq 3$. Call this identified vertex $v$. Since $G$ is a tree with $k$ leaves, $\Z\sub 1(G)=k$. Notice that $G-v$ consists of $k$ disconnected paths of length $n-1$. Therefore, $\Z\sub 1(G-v)= 2k$ and $\Z\sub 1(G)-\Z\sub 1(G-v)=-k$. Recall that $k$ is arbitrary, showing that $\Z\sub 1(G)-\Z\sub1 (G-v)$ is unbounded.

\section{The $\ell$-leaky forcing number for certain graph families}
\label{families}

 Section \ref{families} applies theory developed in Sections \ref{theory} and \ref{structure} to resolve the $\ell$-leaky forcing numbers for various graph families. Recall that Section \ref{theory} ended with Theorem \ref{lower} which states that $\ell+2\leq \Z\sub \ell(G) $ as long as $2\leq \ell\leq |V(G)|-3$. Proposition \ref{prop:independent} applies this bound to restrict the order of a $(k-1)$-resilient graph with zero forcing number $k$.  

\begin{prop}\label{prop:independent}
If $\Z\sub0(G)=\Z\sub {k-1}(G)=k$ and $|V(G)|\geq 2$, then $G$ is the complete graph $K_{k+1}$, $K_a\cup\overline{K}_{k+1-a}$ with $a\geq 2$, or  $\overline{K}_{k}$.
\end{prop}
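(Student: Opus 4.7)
The plan is to use Theorem \ref{lower} to squeeze $n := |V(G)|$ into a small interval, and then classify graphs of the allowed orders with the required zero forcing number. Since $\Z\sub{k-1}(G) = k$ sits strictly below the lower bound $(k-1)+2 = k+1$ that Theorem \ref{lower} guarantees whenever $2 \leq k-1 \leq n-3$, the hypothesis of that theorem must fail. Taking the substantive regime $k \geq 3$ (so that $k-1 \geq 2$), this forces $k-1 > n-3$, i.e.\ $n \leq k+1$; combined with $n \geq \Z\sub 0(G) = k$, the order is pinned down to $n \in \{k,\,k+1\}$.

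The case $n = k$ is immediate: $\Z\sub 0(G) = n$ forces $G$ to have no edges, since any edge $uv$ would let $V(G) \setminus \{v\}$ be a zero forcing set via $u$. Hence $G = \overline{K}_k$.

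The real work is the case $n = k+1$, where $\Z\sub 0(G) = n-1$. The target is $G = K_a \cup \overline{K}_{n-a}$ for some $a \geq 2$ (the case $a = k+1$ being $K_{k+1}$). The plan is to show that for every pair $u,v \in V(G)$ the set $V(G)\setminus\{u,v\}$ fails to be a zero forcing set, and to read off the structural consequences. A short analysis of the first possible force shows that $V(G)\setminus\{u,v\}$ \emph{is} a zero forcing set precisely when (i)~some $w \in V(G)\setminus\{u,v\}$ is adjacent to exactly one of $u,v$, and (ii)~whichever of $u,v$ is not forced first has at least one neighbor in $G$. Failure of this condition on an edge $uv$ negates (i), forcing every outside $w$ to be adjacent to both or neither of $u,v$, whence $N[u] = N[v]$; so each edge lies inside a closed-twin equivalence class, and $G$ decomposes as a disjoint union of cliques (these classes being pairwise non-adjacent). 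Failure on a non-edge $uv$ with both endpoints non-isolated likewise forces $N(u) = N(v)$, which, inside a disjoint union of cliques, is impossible as soon as the two cliques containing $u$ and $v$ both have size at least $2$. Hence at most one clique has more than one vertex, giving $G = K_a \cup \overline{K}_{n-a}$; the inequality $a \geq 2$ follows from $\Z\sub 0(G) < n$, which requires at least one edge.

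I expect the main obstacle to be the structural classification of graphs with $\Z\sub 0(G) = n-1$. It involves no deep tool beyond the color change rule, but demands careful bookkeeping: the edge/non-edge dichotomy must be handled separately, and the role of isolated vertices (where condition (ii) fails automatically) must be tracked so that the non-edge argument is applied only when both endpoints have positive degree. Once this is in hand, the reduction to $n \leq k+1$ through Theorem \ref{lower} and the trivial $n = k$ case are essentially mechanical.
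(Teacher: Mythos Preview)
Your argument is correct and follows the same skeleton as the paper's proof: both invoke Theorem~\ref{lower} with $\ell = k-1$ to force $n \le k+1$, dispose of $n=k$ as the edgeless case, and then classify graphs on $k+1$ vertices with zero forcing number $k$. Your explicit restriction to $k \ge 3$ matches the paper's implicit one, since Theorem~\ref{lower} requires $\ell \ge 2$.

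The only genuine difference is in the $n=k+1$ classification. The paper outsources this step, citing Proposition~2.2 of Row~\cite{R2012} to identify $K_{k+1}$ as the unique \emph{connected} graph on $k+1$ vertices with $\Z\sub 0 = k$, and then handles the disconnected case by observing that at most one component can be nontrivial and that component must itself be a clique by the same cited result. You instead reprove this classification directly: analyzing when $V(G)\setminus\{u,v\}$ can be a zero forcing set, extracting $N[u]=N[v]$ on every edge to get a disjoint union of cliques, and then using the non-edge case to rule out two nontrivial cliques. Your route is longer but entirely self-contained and elementary, requiring nothing beyond the color change rule; the paper's is shorter but leans on an external reference. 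Both reach the same conclusion and neither has a gap.
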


\begin{proof}
If $G=\overline{K}_k$, then we are done. Therefore, let $G$ be a graph on $n$ vertices with at least one edge. By Theorem \ref{lower}, $n-2\leq k-1$, implying that $n\leq k+1$. Furthermore, if $n\leq k$, then $\Z\sub 0(G)\leq k-1$. Thus, $n=k+1$. By Proposition 2.2 in \cite{R2012}, $K_{k+1}$ is the only connected graph on $k+1$ vertices with zero forcing number equal to $k$. Therefore, we will assume that $G$ is disconnected. Notice that $G$ cannot have more than one  nontrivial component without contradicting the zero forcing number. Let $C$ be the non-trivial component of $G$. If $\Z\sub0(C)< |V(C)|-1$, then we have a contradiction. Therefore, $C$ is a clique by Proposition 2.2 in \cite{R2012} and the proof is complete.
\end{proof}

The proof idea from Theorem \ref{lower} can also be applied for the case where $\ell=1$. However, here we find that it is possible for $\Z\sub 1(G)\leq 2$. In particular, we can combine the idea of clever leak placement with Proposition \ref{minimum} to determine which graphs have $1$-leaky forcing number equal to $2$. Note that Dillman and Kenter found that paths and cycles have $1$-leaky forcing number equal to $2$ \cite{DK}. 

\begin{thm}\label{cycle}
Let $G$ be a graph on $n\geq 2$ vertices. Then, $\Z\sub1(G)=2$ if and only if $G=C_n$ with $\Z\sub 0(G)=2$ or $G=P_n$ with $\Z\sub 0(G)=1$.
\end{thm}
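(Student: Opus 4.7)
The plan is to prove both directions, with the backward direction as a warm-up. For $G = P_n$ with $n \geq 2$, the two endpoints form a $1$-leaky forcing set: any single leak either allows the unleaked endpoint to force the whole path, or the two endpoints' chains meet at the leaked interior vertex, which is then forced from one side. For $G = C_n$ with $n \geq 3$, the same argument works with any two adjacent vertices. In both cases $\Z\sub1(G) \geq 2$, since a single blue vertex cannot be $1$-leaky: designate it as the leak and no force is possible.

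For the forward direction, I would suppose $\Z\sub1(G) = 2$ with $B = \{u,v\}$ a minimum $1$-leaky forcing set. Proposition \ref{ineq} gives $\Z\sub0(G) \in \{1,2\}$. If $\Z\sub0(G) = 1$ then $G$ is a path, putting us in the first case. Otherwise $\Z\sub0(G) = 2$, which means $G$ is $1$-resilient, and Proposition \ref{minimum} yields $\delta(G) \geq 2$. Setting $L = \{u\}$, the fact that $u$ cannot force means the process decomposes into a trivial chain at $u$ and a single linear chain $v = v_0 \fs v_1 \fs \cdots \fs v_{n-2}$ covering $V(G)\setminus\{u\}$ (each non-leak vertex has out-degree at most $1$ in the forcing digraph, so the non-trivial component is a directed path). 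From the forcing rule applied at each $v_i$ I would extract
\[ N(v_i) \subseteq \{u, v_0, v_1, \ldots, v_{i-1}, v_{i+1}\} \quad \text{for } 0 \leq i \leq n-3, \]
and apply this constraint to both endpoints of any hypothetical edge $v_i v_j$ with $i < j$ to conclude $j = i+1$. Thus the subgraph induced by $\{v_0, \ldots, v_{n-2}\}$ is exactly the path $v_0 v_1 \cdots v_{n-2}$.

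To finish, I would use $\delta(G) \geq 2$: the two path-endpoints $v_0$ and $v_{n-2}$ each need a second neighbor, so $u v_0, u v_{n-2} \in E(G)$. Swapping leaks to $L = \{v\}$, the process must now begin with $u$ forcing, so $u$ has a unique white neighbor other than $v$, giving $|N(u)\setminus \{v_0\}| = 1$; combined with $u v_{n-2} \in E(G)$, this forces $N(u) = \{v_0, v_{n-2}\}$. Therefore $G$ is exactly the cycle $u v_0 v_1 \cdots v_{n-2} u = C_n$. The main obstacle will be carefully justifying the chain decomposition for the $L = \{u\}$ process, i.e.\ that there is exactly one nontrivial chain and it visits every vertex outside $\{u\}$, together with the symmetry argument that eliminates non-consecutive edges among the chain vertices; once these are in place, the minimum-degree condition and the second leak-$\{v\}$ chain extraction close the argument directly.
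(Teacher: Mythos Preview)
Your proposal is correct and follows essentially the same approach as the paper: use $1$-resilience with Proposition \ref{minimum} to get $\delta(G)\ge 2$, leak one of the two blue vertices to extract a single forcing chain covering the remaining $n-1$ vertices, deduce the induced path structure from the forcing rule, and then pin down the remaining vertex's adjacencies. The only minor variation is in the final step: the paper first shows $d(x)=d(y)=2$ and $xy\in E(G)$, then leaks the unique interior neighbor $v_i$ of $y$ to force $i=n-2$, whereas you use $\delta(G)\ge 2$ to get $uv_0,uv_{n-2}\in E(G)$ and then leak $v$ to read off $|N(u)|=2$ directly; both routes close the argument cleanly.
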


\begin{proof}
The backward direction of the statement is proven in \cite{DK}. 

 Assume that $\Z\sub0(G)=\Z\sub1(G)=2$. Let $\{x,y\}$ be a $1$-leaky forcing set of $G$. Since $\Z\sub 0(G)=\Z\sub 1(G)$, we can conclude that  $\delta(G)\geq 2$ by Proposition \ref{minimum}. Therefore, $xy\in E(G)$ and $d(x),d(y)=2$. Set $L=\{y\}$ as a leak to obtain a forcing chain $x\fs v_1\fs v_2\fs \dots \fs v_{n-2}.$ This implies that $v_iv_j\in E(G)$ if and only if $j=i+1$. Note that $y$ is adjacent to exactly one vertex $v_i$ with $1\leq i \leq n-2$. If $i<n-2$, then set $L=\{v_i\}$ as a leak. In this case, $\{x,y\}$ cannot force $v_{n-2}$, which is a contradiction. Therefore, $i=n-2$ and $G$ is a cycle.
 
If $\Z\sub0(G)=1$, then $G$ is a path by a result in  \cite{R2012}. 
\end{proof}

By considering all the cases where $\Z\sub 1(G)=2$ and noting that $\Z\sub 1(G)>1$, we can conclude that the general lower bound in Theorem \ref{lower} extends to $\ell=0,1$ provided that we rule out paths and cycles. 

\begin{cor}
If $G$ is a graph of order $n$, $G$ is not $P_n$ or $C_n$, and $0\leq \ell \leq n-3$, then $\ell + 2 \le \Z_{(\ell)}(G)$.
\end{cor}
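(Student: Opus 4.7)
The plan is to split on the value of $\ell$ and invoke the existing classifications in each regime; no new forcing-theoretic machinery should be required. For $2 \leq \ell \leq n-3$ the conclusion is exactly Theorem \ref{lower}, which has no extra hypothesis beyond the range on $\ell$, so that case is immediate. The remaining cases are $\ell = 0$ and $\ell = 1$, and these are precisely where the hypothesis that $G$ is neither a path nor a cycle becomes essential, so the plan is to feed each of them into a known characterization of the graphs with small (leaky) forcing number.

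For $\ell = 0$, the target is $\Z_{(0)}(G) \geq 2$. The result of \cite{R2012} recalled in the proof of Theorem \ref{cycle} states that $\Z_{(0)}(G) = 1$ if and only if $G$ is a path. Since $G \neq P_n$ by hypothesis, this directly gives $\Z_{(0)}(G) \geq 2 = 0 + 2$.

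For $\ell = 1$, the target is $\Z_{(1)}(G) \geq 3$. Theorem \ref{cycle} characterizes the graphs of order at least $2$ with $\Z_{(1)}(G) = 2$ as exactly the paths and cycles; by Proposition \ref{ineq}, $\Z_{(1)}(G) \geq \Z_{(0)}(G) \geq 1$, and since a single blue vertex could itself be chosen as the leak we in fact have $\Z_{(1)}(G) \geq 2$ whenever $n \geq 2$. Ruling out $G \in \{P_n, C_n\}$ therefore upgrades this to $\Z_{(1)}(G) \geq 3 = 1 + 2$.

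The main (and quite minor) obstacle is purely bookkeeping: ensuring the three cases $\ell = 0$, $\ell = 1$, $\ell \geq 2$ are stated in a range-consistent way with $0 \leq \ell \leq n-3$ and that the excluded families $P_n$ and $C_n$ match exactly the extremal graphs in Theorem \ref{cycle} and in \cite{R2012}. Since the corollary is essentially a packaging of Theorem \ref{lower}, Theorem \ref{cycle}, and the classification from \cite{R2012}, the proof should be a short three-case deduction rather than an independent argument.
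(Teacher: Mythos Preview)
Your proposal is correct and mirrors the paper's own reasoning exactly: the paper does not give a formal proof of this corollary but immediately precedes it with the remark that Theorem~\ref{lower} covers $\ell\ge 2$, while Theorem~\ref{cycle} (together with $\Z_{(1)}(G)>1$) and the $\Z_{(0)}(G)=1\Rightarrow G=P_n$ fact from \cite{R2012} handle $\ell=1$ and $\ell=0$, respectively. Your three-case deduction is precisely this packaging.
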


Lemma \ref{degree} states that any $\ell$-leaky forcing set must contain all vertices of degree $\ell$ or less. It is interesting to consider under which conditions \[B=\{ v\in V(G): d(v)\leq \ell\}\] is an $\ell$-leaky forcing set of $G$. Though we do not have a complete answer to this question, Theorem \ref{tree} confirms that trees belong to this class of graphs. Note that the case where $\ell =1 $ of Theorem \ref{tree}
was proven in \cite{DK}.

\begin{thm}\label{tree}
If $T$ is a tree and  $B$ is the set of vertices in $T$ with degree at most $\ell$ where $\ell\geq1$, then $\Z\sub{\ell}(T)=|B|.$
\end{thm}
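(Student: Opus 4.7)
The plan is to prove the non-trivial direction — that $B$ is an $\ell$-leaky forcing set — by induction on $|V(T)|$; the inequality $\Z\sub{\ell}(T) \ge |B|$ is immediate from Lemma \ref{degree}. The base case $|V(T)|=1$ is trivial. For the inductive step, fix $T$, assume the theorem for every smaller tree, and let $L \subseteq V(T)$ with $|L| \le \ell$ be an arbitrary leak set.

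The first key step is a leaf-count. If every vertex of $T$ has degree at most $\ell$ then $B = V(T)$ and no forces are needed. Otherwise some vertex $v$ has $\deg_T(v) \ge \ell+1$; then $T-v$ has $\deg_T(v) \ge \ell+1$ components, and each component contributes at least one leaf of $T$: a singleton component is itself a leaf of $T$, while any larger component, viewed as a tree, has at least two of its own leaves, at most one of which is the neighbor of $v$ in that component. So $T$ has at least $\ell+1$ leaves, and since $|L|\le \ell$ there exists a leaf $u \in B \setminus L$.

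I would then split on whether $p := p(u)$ lies in $B$. Write $T' := T - u$ and let $B'$ denote the set of vertices of $T'$ of degree at most $\ell$ in $T'$. If $\deg_T(p) \le \ell$, then $p \in B$, and a direct degree comparison gives $B \setminus \{u\} = B'$; the inductive hypothesis applied to $T'$ with leak set $L$ yields a forcing process of $T'$, and together with $u$ being blue this colors all of $T$. If $\deg_T(p) \ge \ell+1$, then $u$ first performs the force $u \fs p$ (valid since $u$ is a blue non-leak with unique white neighbor $p$); the resulting blue set restricted to $V(T')$ is $(B \setminus \{u\}) \cup \{p\} \supseteq B'$, and again the inductive hypothesis on $T'$ finishes the argument. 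Forces produced by the $T'$-process remain valid in $T$ because the only vertex of $T'$ whose $T$-neighborhood differs is $p$, and its extra neighbor $u$ is already blue.

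The main subtlety is the borderline case $\deg_T(p) = \ell + 1$, where $p \notin B$ but $p \in B'$, so that $B \setminus \{u\}$ is strictly smaller than $B'$. This is exactly why the second case prescribes that $u$ force $p$ before invoking the inductive hypothesis: the step $u \fs p$ supplies precisely the missing element of $B'$. The leaf-counting observation is what prevents us from being cornered into the analogous problematic scenario in which the only available leaf would itself be a leak.
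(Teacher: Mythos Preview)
Your proof is correct, but it follows a genuinely different route from the paper's. The paper argues directly by contradiction: assuming $B_L^{[\infty]}\neq V(T)$, it deletes every blue vertex with at most one white neighbor, observes that the remaining graph is a forest whose leaves are white, and then counts that each such leaf (or isolated white vertex) is surrounded by at least $\ell$ blue neighbors that must all be leaks, forcing $|L|>\ell$. Your argument instead runs an induction on $|V(T)|$: you use a leaf-count to locate a non-leak leaf $u$, peel it off (forcing its parent first when necessary), and invoke the hypothesis on $T-u$. The paper's approach is a single global structural observation and is a bit shorter; your approach is more constructive, effectively describing a recursive forcing procedure, and it makes transparent exactly where the bound $|L|\le\ell$ versus ``$\ge \ell+1$ leaves'' is used. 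Both handle the borderline $\deg_T(p)=\ell+1$ cleanly: the paper absorbs it into the leaf-of-$T'$ count, while you handle it by the preliminary force $u\to p$.
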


\begin{proof}
Let $L$ be a set of $\ell$ leaks and note that Lemma \ref{degree} implies $\Z\sub\ell(T)\geq |B|$.  For the sake of contradiction, suppose that $B^{[\infty]}_L\neq V(T)$. Obtain the graph $H$ by deleting all blue vertices in $\binf_L$ with at most one white neighbor from $T.$ Notice that $H$ is a forest. Let $T'$ be a component of $H$. 
 
 Suppose that $T'$ is an isolated vertex, $v$. Then $v$ has at least $\ell+1$ blue neighbors in $T$, each of which has $v$ as its only white neighbor. Therefore, $N_T(v)\subseteq L$ which is a contradiction. Thus, we may assume that $T'$ is a tree on at least two vertices. 
 
 Notice that all of the leaves of $T'$ are white. Furthermore, $T'$ has at least two leaves $u$ and $v$. Therefore, $u$ has at least $\ell$ blue neighbors in $T$ each of which has $u$ as its only white neighbor. The same holds for $v$. It follows that $(N_T(u)\cup N_T(v))\cap B^{[\infty]}_L\subseteq L$, which is a contradiction.  Thus, $B^{[\infty]}_L= V(T)$ and $B$ is an $\ell$-leaky forcing set.
\end{proof}

The {\it $n^{th}$ supertriangle,} $T_n,$ is an equilateral triangular grid with $n$ vertices on each side.
Supertriangles are another example that the implied lower bound in Lemma \ref{degree} can be tight. In particular, Theorem \ref{triangle} shows that if $\ell =4,5$, then the vertices of a $T_n$ with degree less than or equal to $\ell$ do form an $\ell$-leaky forcing set. Notice that the supertriangle has three sides and a natural orientation. Let the sides be given by $S_0,R_0,M_0$ where $S_0$ is the left side, $R_0$ is the right side, and $M_0$ is the bottom side in Figure \ref{fig:super}. Furthermore, let $S_i$ be the left side of $T_n-\bigcup_{j<i}S_{j}$. In this sense, we can consider the left diagonal ``rows" of $T_n$ indexed from the left. Define $R_i$ and $M_i$ similarly. Furthermore, use the depiction of the supertriangle in Figure \ref{fig:super} to partition the edges of $T_n$ into three sets: the set  of horizontal edges $H$,  the set of edges pointing from the upper left corner of the page to the lower right corner of the page $SE$, and the edges pointing from the lower left corner of the page to the upper right corner of the page $NE$.

 \begin{figure}[H]
    \centering{
    \includegraphics[width=0.4\textwidth]{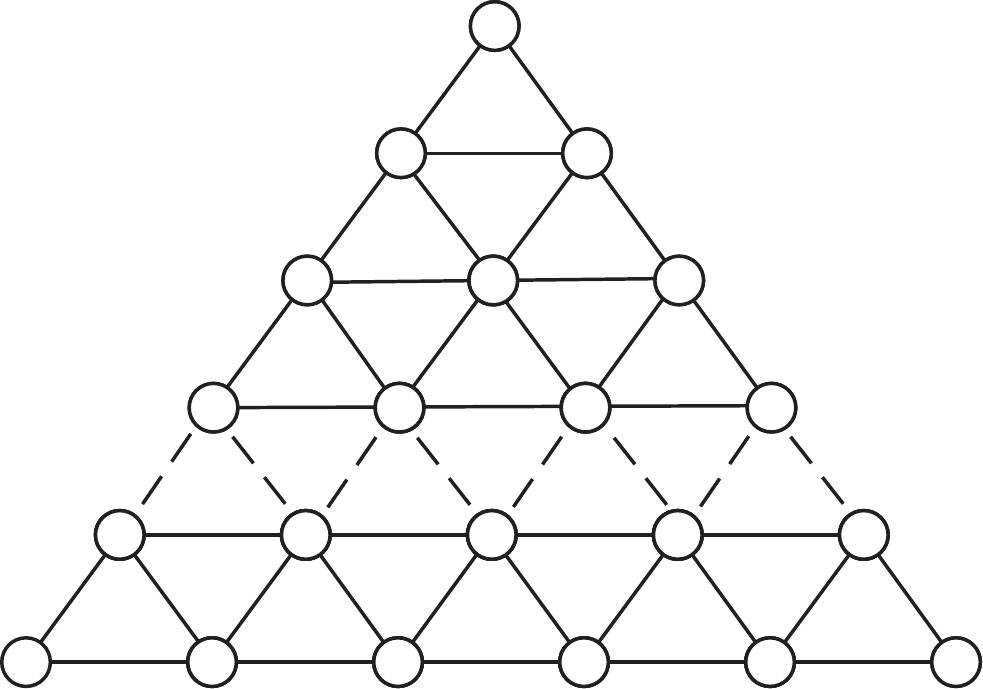}
    \caption{The supertriangle, $T_n$.}
    \label{fig:super}}
\end{figure}

\begin{thm}\label{triangle}
For $T_n,$ 
\begin{enumerate}
    \item $\Z\sub 0(T_n)=\Z\sub 1(T_n)=n$,
    \item $\Z\sub 2(T_n)\leq \Z\sub 3(T_n)\leq 2n-1$, and
    \item  $\Z\sub 4(T_n)=\Z\sub 5(T_n)=3n-3$.
\end{enumerate}
\end{thm}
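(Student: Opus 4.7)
The argument breaks into three parts, one per item. The ``$\le$'' inequalities between consecutive leak levels are free from Proposition \ref{ineq}, so I must prove $\Z\sub 0(T_n)\ge n$ together with $\Z\sub 1(T_n)\le n$ for (1); $\Z\sub 3(T_n)\le 2n-1$ for (2); and $3n-3\le \Z\sub 4(T_n)$ together with $\Z\sub 5(T_n)\le 3n-3$ for (3).

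For (1), the lower bound $\Z\sub 0(T_n)\ge n$ is a standard zero-forcing fact about the triangular grid. For the upper bound I take $B=M_0$ and apply Theorem \ref{double}: I must exhibit two distinct forcers in $\mathcal F(B)$ for each $v\in V(T_n)\setminus M_0$. The two bottom corners of $M_0$ each initiate a forcing process that sweeps upward from opposite ends; in the ``left'' process the vertex $v_{i,j}$ is forced by $v_{i+1,j}$, while in the ``right'' process $v_{i,j}$ is forced by $v_{i+1,j+1}$. These two forcers are distinct for every $v_{i,j}$ with $i<n$, so $M_0$ is $1$-leaky and $\Z\sub 1(T_n)\le n$.

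For (2), I take $B=M_0\cup R_0$, of size $2n-1$, and show $B$ is a $3$-leaky forcing set by iterating Theorem \ref{setofforces} for $\ell=1,2,3$. In addition to the two upward sweeps from part~(1), with $R_0$ also blue we now have a sweep initiated at the top corner $v_{1,1}\in R_0$ (whose unique initial white neighbor is $v_{2,1}$) and a further sweep driven by $R_0$ advancing inward along its neighbor diagonal $R_1$. The resulting four largely-independent corner-initiated schedules, spliced via Lemma \ref{switch}, furnish the multiple forcers demanded by Theorem \ref{setofforces} at each stage.

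For (3), the lower bound $\Z\sub 4(T_n)\ge 3n-3$ is immediate from Lemma \ref{degree}: the $3n-3$ boundary vertices consist of three corners of degree~$2$ and $3(n-2)$ side vertices of degree~$4$, all of degree at most~$5$. For the upper bound, I set $B$ to the boundary and iterate Theorem \ref{setofforces} from $\ell=1$ up to $5$. Every interior vertex has all six of its neighbors as candidate forcers, and the three diagonal decompositions $\{S_i\},\{R_i\},\{M_i\}$ produce three families of inward-propagating forcing schedules. The main obstacle is the high-leak verification at $\ell=4,5$: naive counting (six forcers per interior vertex, minus up to five leaks) does not suffice, as the $K_{\ell+1}\square K_2$ counterexample in Section \ref{theory} illustrates. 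I expect to case-split on the geometric distribution of the leak set $L$ (clustered near one corner, strung along a single side, or scattered in the interior) and, in each case, use Lemma \ref{switch} to splice sweeps from different diagonal directions into two forcing processes that avoid all forces originating in $L$. The three-fold rotational symmetry of $T_n$ collapses much of the casework, but leak configurations that isolate a single corner region will likely require bespoke handling.
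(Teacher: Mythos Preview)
Your argument for (1) is correct and is essentially the paper's proof: $B=M_0$ together with the two diagonal sweeps (along $NE$ versus $SE$ edges) gives two forcers for every non-blue vertex, so Theorem~\ref{double} applies.

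For (2) and (3) you have chosen the right blue sets (two sides, respectively all three sides) and the right framework (iterate Theorem~\ref{setofforces}), but what you have written is a plan, not a proof. In (2) the sentence ``four largely-independent corner-initiated schedules \ldots\ furnish the multiple forcers demanded'' is precisely the claim that needs verification: Theorem~\ref{setofforces} at level $\ell=3$ requires that for \emph{every} pair of leaks $L$ and \emph{every} white $v$ there are two forcers in $\mathcal F_L(B)$, and four global schedules do not deliver this automatically (two well-placed leaks can kill two of your schedules simultaneously for many vertices at once). In (3) you say outright ``I expect to case-split'' and ``will likely require bespoke handling'', so the argument is absent.

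The paper's organizing device, which you are missing, is this. Given a leak set $L$, let $i,j$ (and, in part (3), also $k$) be the \emph{smallest} indices for which $S_i,R_j,M_k$ contain a leak. Then $B$ can force everything up through those diagonals---call the result $B'$---in two ways, unobstructed. Now split cases on how many leaks actually lie in $S_i\cup R_j\ (\cup\, M_k)$. If all of them do, then the residual picture is exactly the configuration already handled at a lower leak level (e.g.\ $S_i$ alone contains at most one leak, so it can sweep the interior using only $SE$ and $H$ edges, while $R_j$ sweeps using $NE$ and $H$ edges, giving two forcers per vertex). If fewer do, advance to the next indices $i'>i$, $j'>j$ and repeat. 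This ``smallest-index diagonal'' bookkeeping is what converts your vague four-schedule picture into the precise two-forcers-per-vertex statement that Theorem~\ref{setofforces} demands, and it is what you need to supply.
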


\begin{proof}
1. In \cite{AIM}, it was shown that $\Z\sub0(T_n)=n$ by taking $S_0$, $M_0$, or $R_0$ as a blue set of $T_n$. We will show that $\Z\sub 1(T_n)=n$ by finding a zero forcing set of size $n$ that forces every vertex in two ways. Let $B=M_0$. Notice that vertices in $M_1$ can be forced by two distinct vertices in $M_0$ along $NE$ edges or along $SE$ edges. That is, every vertex in $M_1$ can be forced in two ways. By way of induction, vertices in $M_{i+1}$ can be forced by two distinct vertices in $M_{i}$  along $NE$ edges or  along $SE$ edges. Therefore, $B$ is a $1$-leaky forcing set by Theorem \ref{double}. Recall that $\Z\sub0(T_n)\leq \Z\sub 1(T_n)$ by Proposition \ref{ineq}. Thus, $\Z\sub1(T_n)=n$.

2. Let $B=S_0\cup R_0$. Clearly, $B$ is a $1$-leaky forcing set. Let $x\in V(T_n)$ be a leak. Notice that $x\in S_i\cap R_j$ for some $i$ and $j$. The initial blue set $B$ can color vertices  $B'=\bigcup_{\substack{y\leq i\\ z\leq j}} S_y\cup R_z$ blue in two ways without being inhibited by the leak $x$. Notice that $B'$ is a zero forcing set of $T_n$ such that $x$ does not have any white neighbors and every remaining white vertex can be forced in two ways. Therefore, by Theorem \ref{setofforces}, $B$ is a $2$-leaky forcing set and $\Z\sub 2(T_n)\leq 2n-1.$

Now let $x_1,x_2\in V(T_n)$ be a pair of distinct leaks.  Let $i$ be the smallest index such that $S_i$ contains a leak. Similarly, let $j$ be the smallest index such that $R_j$ contains a leak. The initial blue set $B$ can color vertices $B'=\bigcup_{\substack{y\leq i\\ z\leq j}} S_y\cup R_z$ blue in two ways without being inhibited by $x_1,x_2$. There are two cases: either $S_i\cup R_j$ contains both $x_1$ and $x_2$, or $S_i\cup R_j$ contains only $x_1$ (without loss of generality). 

\textbf{Case 1:} Assume that $S_i\cup R_j$ contains both $x_1$ and $x_2$. Let $S_i\cap R_j =\{x\}$. Notice that $S_i\setminus \{x\}$ and $R_j\setminus \{x\}$ each contain at most one leak. Therefore, $S_i$ can force the remaining white vertices using only edges in $SE$ and $H$ (where forces using edges in $H$ go from left to right). Similarly, $R_j$ can force the remaining white vertices using only edges in $NE$ and $H$ (where forces using edges in $H$ go from right to left). Thus, $B$ can force every vertex in $V(T_n)\setminus B$ in two ways despite leaks  $x_1$ and $x_2.$ 

\textbf{Case 2:} Assume that $S_i\cup R_j$ contains only $x_1$ (without loss of generality). Therefore, $S_i\cap R_j =\{x_1\}$ and $N(x_1)$ is blue. Since $B'$ is a $2$-leaky forcing set of $T_n$, we know that every vertex $v\in V(T_n)\setminus B'$ can be forced in two ways by Theorem \ref{setofforces} despite the leak $x_2$. 

In either case, $B$ can force all white vertices in two ways despite leaks $x_1$ and $x_2$. Therefore, $B$ is a $3$-leaky forcing set by Theorem \ref{setofforces}. Thus, $\Z\sub2(T_n) \le \Z\sub3(T_n) \le 2n-1$.

3.  Since $S_0$, $R_0$, and $M_0$ contain all the vertices of $T_n$ with degree $4$ or less, we know that $3n - 3 \le \Z\sub 4(T_n)$ by Lemma \ref{degree}. We will proceed by showing that $B=S_0\cup R_0\cup M_0$ is a $4$-leaky forcing set and then show $B$ is a $5$-leaky forcing set. 

Clearly, $B$ is a $3$-leaky forcing set. Let $L$ be a set of three leaks. Let $i,j,k$ be the smallest indices such that  each of $S_i, R_j, M_k$ contain leaks. Notice that $B$ can force the vertices in \[B'=\bigcup_{\substack{x\leq i\\ y\leq j\\ z\leq k}}S_x\cup R_y\cup M_z\] in two ways. There are two cases:  $S_i\cup R_j\cup M_k$ contains exactly three leaks, or $S_i\cup R_j\cup M_k$ contains exactly two leaks. In any case,  $S_i\setminus (R_j\cup M_k)$ contains at most one leak and $R_j\setminus (S_i\cup M_k)$ contains at most one leak (without loss of generality). 

\textbf{Case 1.1:} If $S_i\cup R_j\cup M_k$ contains three leaks, then the remaining vertices can be forced in two ways despite the leaks in $L$ as discussed in case 1 in the argument for claim 2.

\textbf{Case 2.1:} If $S_i\cup R_j\cup M_k$ contains two leaks, then Let $i'>i$ and $j'>j$ be the smallest indices such that $S_{i'}$ and $R_{j'}$ contain a leak. As we have seen in case 2 of the argument for claim 2, $S_i$ and $R_j$ can force the vertices in $B''=\bigcup_{\substack{i<y\leq i'\\ j< z\leq j'}}S_y\cup R_z$ in two ways despite the leaks in $L$. Furthermore, $B'\cup B''$ will force the remaining white vertices in two ways despite leaks in $L$. In either case, $B$ is a $4$-leaky forcing set by Theorem \ref{setofforces}.

Let $L$ be a set of four leaks. Let $i,j,k$ be the smallest indices such that $S_i, R_j, M_k$ contain leaks. Notice that $B$ can force the vertices \[B'=\bigcup_{\substack{x\leq i\\ y\leq j\\ z\leq k}}S_x\cup R_y\cup M_z\] in two ways.  There are three cases: $S_i\cup R_j\cup M_k$ contains either two, three, or four leaks. In any case, without loss of generality, $S_i\setminus(R_j\cup M_k)$ and $R_j\setminus(S_i\cup M_k)$ contain at most one leak each. 

\textbf{Case 1.2:} If $S_i\cup R_j\cup M_k$ contains four leaks, then $S_i$ and $R_j$ can force every remaining white vertex despite the leaks in $L$ in two ways as discussed in case 1 in the argument of claim 2. 

\textbf{Case 2.2:} Assume that $S_i\cup R_j\cup M_k$ contains three leaks. Following the argument in case 2.1 of claim 3, we see that $B'$ can force the remaining white vertices in two ways despite the leaks in $L$. 

\textbf{Case 3.2:} Assume that $S_i\cup R_j\cup M_k$ contains two leaks. Let $i'>i$, $j'>j$, and $k'>k$ be the smallest indices such that $S_{i'}$, $R_{j'}$, and $M_{k'}$ contain a leak. In this is situation, $S_{i'}\cup R_{j'}\cup M_{k'}$ necessarily contains two leaks. Let \[B''=\bigcup_{\substack{i<x\leq i'\\ j< y\leq j'\\ k<z\leq k'}}S_x\cup R_y\cup M_z\] and notice that $B'$ can force each vertex in  $B''$  in two ways despite the leaks in $L$. Finally, $B'\cup B''$ can force the remaining white vertices as discussed in case 1 in the argument for claim 2.

In any case, $B$ can force each white vertex in two ways despite the leaks in $L$. Therefore, $B$ is a $5$-leaky forcing set by Theorem \ref{setofforces}. Thus, $\Z\sub 4(T_n)=\Z\sub 5(T_n)=3n-3$.
\end{proof}
 
 A {\it rectangular grid graph} is the Cartesian product $P_n \square P_m$. Let $V(P_n \square P_m)=\{(x,y): 1\leq x\leq n, 1\leq y\leq m\}$ and $(x,y)(a,b)\in E(G)$ if and only if $x=a$ and $y\pm 1=b$ or $y=b$ and $x\pm 1=a$. We say that $(x,y)$ is the vertex in row $x$ and column $y$. The motivation for studying these graphs is due to a question posed by Dillman and Kenter \cite{DK}. However, the grid graph has also been studied extensively in more general zero forcing contexts \cite{AIM}.

\begin{quest}\cite{DK}\label{quest:DK}
If $1\le n \le m$, is there an $n\ge 10$, such that $\Z\sub1(P_n \square P_m) > \min\{2n,m\}$?
\end{quest}

  In pursuit of answering Question \ref{quest:DK}, Dillman and Kenter established several upper bounds on $\Z\sub 1(P_n\square P_m)$.  The upper bound arguments in \cite{DK} are based on describing a set of initially colored vertices and doing case analysis to show the set is a $1$-leaky forcing set, e.g. the proof of Theorem \ref{thm:DK4.20} uses this technique.  A generalization of the $1$-leaky forcing set used in Theorem \ref{thm:DK4.20} is used to establish Theorem \ref{thm:pnpm} by applying Theorem \ref{double}.  This shows that the answer Question \ref{quest:DK} is ``No." 
 
 \begin{thm}\label{thm:DK4.20}\cite{DK} If $1 \le n \le m$ and $n \le \left\lfloor \dfrac{m}{2} \right\rfloor + 2$, then $\Z\sub 1(P_n\square P_m) \le m$.
 \end{thm}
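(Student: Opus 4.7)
The plan is to exhibit an explicit $1$-leaky forcing set $B \subseteq V(P_n \square P_m)$ with $|B| = m$ and verify it via Theorem \ref{double}. A natural candidate is the top row $B = \{(1, j) : 1 \le j \le m\}$; it has size $m$ and is manifestly a zero forcing set of $P_n \square P_m$ via the column-by-column top-to-bottom propagation $(i-1, j) \fs (i, j)$.

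To apply Theorem \ref{double}, I would show that every vertex $v = (i, j)$ with $i \ge 2$ admits two distinct forces in $\calf(B)$. One such force is always the standard $(i-1, j) \fs (i, j)$ coming from the top-down process. For the second, I would construct, for each such $v$, an alternative forcing process of $B$ in which $v$ is reached from a different neighbor, namely from $(i, j-1)$, $(i, j+1)$, or $(i+1, j)$. The device for producing such a process is a \emph{staircase} that propagates diagonally from a top corner. Beginning at the top-right, one forces $(1, m) \fs (2, m)$, then $(1, m-1) \fs (2, m-1)$ which unlocks $(2, m) \fs (3, m)$, then $(1, m-2) \fs (2, m-2)$ unlocking $(2, m-1) \fs (3, m-1)$ and $(3, m) \fs (4, m)$, and so on; after $n - 1$ diagonal descent steps the staircase reaches row $n$. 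Row $n$ can then be swept leftward and used to propagate back upward through rows $n-1, n-2, \ldots, 2$, providing every non-$B$ vertex with an alternative forcer from below. A symmetric staircase from the top-left corner supplies alternative forcers for any vertex whose second forcer was not produced by the right-hand staircase.

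The width hypothesis $n \le \lfloor m / 2 \rfloor + 2$, equivalently $m \ge 2n - 4$, is exactly the condition that the two staircases fit inside the grid without colliding in the middle. Interior vertices $(i, j)$ with $2 \le i < n$ pick up an alternative forcer $(i+1, j)$ from the upward sweep after row $n$ is entirely blue; the degree-$2$ corner vertices $(n, 1)$ and $(n, m)$ pick up a lateral alternative from the sweep of row $n$; and the degree-$3$ boundary vertices are handled analogously.

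The main obstacle is the bookkeeping required to verify that the staircase processes really are valid forcing processes of $B$ and produce a distinct second forcer for every non-$B$ vertex, paying particular attention to the low-degree corner and boundary cases where there is no flexibility in the choice of alternative neighbor. The threshold $m \ge 2n - 4$ is sharp for this specific construction, reflecting the fact that each staircase consumes $n - 2$ columns of horizontal room to descend from row $1$ to row $n$, and the two symmetric staircases from opposite top corners must not collide in the middle of the grid.
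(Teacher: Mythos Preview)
There is a genuine gap: the top row $B=\{(1,j):1\le j\le m\}$ is \emph{not} a $1$-leaky forcing set for every $(n,m)$ in the stated range. Take $P_3\square P_3$, which satisfies $3\le \lfloor 3/2\rfloor+2$. With this $B$, the center vertex $(2,2)$ has \emph{only one} forcer in $\calf(B)$, namely $(1,2)$. To see this, note that any of the other candidates $(2,1)$, $(2,3)$, $(3,2)$ would need some row-$3$ vertex to be blue while $(2,2)$ is still white; but for each $k$, both possible forces into $(3,k)$ require $(2,2)$ to already be blue (check the neighborhoods). Equivalently, put a leak at $(1,2)$: after $(1,1)\fs(2,1)$ and $(1,3)\fs(2,3)$, each of $(2,1)$ and $(2,3)$ has two white neighbors and the process halts. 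The identical obstruction kills $P_4\square P_4$ (also in range, since $4\le \lfloor 4/2\rfloor+2$) with a leak at $(1,2)$. Your staircase picture does not rescue this: the second step of both the left and the right staircase is exactly $(1,2)\fs(2,2)$, so neither supplies a second forcer for the vertex directly below the midpoint of $B$. (The column count is also off---a staircase needs $n-1$ columns, not $n-2$, to descend from row $1$ to row $n$, and further width to sweep back up; $m\ge 2n-4$ is not the correct threshold for the top-row construction.)

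The paper does not reprove this cited result of \cite{DK}, but it establishes the stronger bound $\Z\sub1(P_n\square P_m)\le m$ for \emph{all} $n\le m$ in Theorem~\ref{thm:pnpm}, and the blue set used there is quite different: two adjacent half-columns meeting a segment of an interior row (an inverted ``T'' of size $m$). The point of that shape is that each half $B_1$, $B_2$ individually forces the whole grid along spiraling chains through the nested boundary cycles $\mathcal C_y$, one half clockwise and the other counter-clockwise, so every white vertex acquires two distinct forcers and Theorem~\ref{double} applies. A straight row cannot do this, precisely because the vertex directly beneath its center has no alternative forcer.
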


  Theorem \ref{thm:DK4.20}, Propositions \ref{prop:DK4.12}, and \ref{prop:DK4.18} allow the focus of the proof of Theorem \ref{thm:pnpm} to be on ``large" values of $n$.  Note that the bound in Proposition \ref{prop:DK4.12} can also be achieved using Corollary \ref{2z0}.
 
\begin{prop}\cite{DK}\label{prop:DK4.12}
If $1\le n \le m$, then  $\Z\sub 1(P_n\square P_m) \le 2n$.

\end{prop}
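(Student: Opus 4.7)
The plan is to apply Corollary~\ref{2z0} directly, which gives the general bound $\Z\sub1(G) \le 2\Z\sub0(G)$ via the union of a minimum zero forcing set with one of its reversals. The only input we need is the zero forcing number of the grid, which was established in \cite{AIM}: for $1 \le n \le m$, we have $\Z\sub0(P_n \square P_m) = n$, witnessed by the leftmost column $B = \{(i,1): 1 \le i \le n\}$ together with the natural forcing process $F$ in which each $(i,j)$ forces $(i,j+1)$.

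Next, I would identify the reversal explicitly. Under $F$, each maximal forcing chain is a row of the grid, so $R_F(B) = \{(i,m) : 1 \le i \le n\}$ is the rightmost column. When $m \ge 2$, the sets $B$ and $R_F(B)$ are disjoint and $|B \cup R_F(B)| = 2n$; Corollary~\ref{2z0} then certifies that $B \cup R_F(B)$ is a $1$-leaky forcing set, giving $\Z\sub1(P_n \square P_m) \le 2n$. The degenerate case $m = 1$ reduces to $P_n$, for which $\Z\sub1(P_n) = 2 \le 2n$ as already recalled before Theorem~\ref{cycle}.

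Since both ingredients (the grid's zero forcing number and the general doubling bound) are already available, there is no real obstacle. The only thing worth being careful about is confirming that the specific forcing process whose reversal we take is the ``row-sweep'' process, so that $R_F(B)$ is the opposite column rather than some messier set; this is where the explicit description of $F$ matters. With that in hand, the proof collapses to a one-line application of Corollary~\ref{2z0}.
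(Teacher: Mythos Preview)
Your proposal is correct and is exactly the approach the paper itself points to: immediately after stating Proposition~\ref{prop:DK4.12} (which is cited from \cite{DK} rather than re-proved), the paper remarks that ``the bound in Proposition~\ref{prop:DK4.12} can also be achieved using Corollary~\ref{2z0},'' and your argument is precisely that application together with $\Z\sub0(P_n\square P_m)=n$ from \cite{AIM}. One tiny quibble: in your degenerate case $m=1$ the hypothesis $n\le m$ forces $n=1$, so the graph is $K_1$ (not a general $P_n$), but the bound is trivial there anyway and in fact the bare inequality $\Z\sub1(G)\le 2\Z\sub0(G)$ already covers every case without splitting off $m=1$.
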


\begin{prop}\label{prop:DK4.18}\cite{DK}
If $1\leq n$, then $\Z\sub 1 (P_n \square P_n) = n.$
\end{prop}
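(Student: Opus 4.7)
My plan is to prove the two inequalities in $\Z\sub 1(P_n \square P_n) = n$ separately. The lower bound $\Z\sub 1(P_n \square P_n) \geq n$ follows from Proposition \ref{ineq} combined with the standard result $\Z\sub 0(P_n \square P_n) = n$ from \cite{AIM}.

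For the upper bound, I would exhibit an explicit $1$-leaky forcing set of size $n$ and apply Theorem \ref{double}. My candidate is the \emph{staircase} $B = \{(\lceil k/2 \rceil, \lceil (k+1)/2 \rceil) : 1 \leq k \leq n\}$, consisting of the first $n$ vertices on the zigzag path $(1,1), (1,2), (2,2), (2,3), (3,3), (3,4), \ldots$ starting from the top-left corner. This set has exactly $n$ vertices and induces a path in $P_n \square P_n$ contained in its upper-left portion. One can quickly check small cases by hand: for $n=3$ the staircase is $\{(1,1),(1,2),(2,2)\}$, and for $n=4$ it is $\{(1,1),(1,2),(2,2),(2,3)\}$.

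By Theorem \ref{double}, it suffices to verify (i) $B$ is a zero forcing set, and (ii) every $v \in V(P_n \square P_n) \setminus B$ admits two distinct forcers in $\calf(B)$. For (i), I would trace the forcing dynamics: the vertex $(1, 1)$ has unique white neighbor $(2, 1)$, triggering a downward cascade along column $1$, where the diagonal arrangement of $B$ provides the blue supports needed for the cascade to propagate. Simultaneously, $(1, 2)$ has unique white neighbor $(1, 3)$, triggering a rightward cascade along row $1$. Once row $1$ and column $1$ are fully blue, the remainder of the grid fills in column-by-column from row $1$ and row-by-row from column $1$.

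The main obstacle is verifying (ii). For each white vertex $v$, I would exhibit two distinct forcing processes of $B$ in which $v$ has different predecessors: typically one from the horizontal cascade along $v$'s row and one from the vertical cascade along $v$'s column. For vertices near the boundary of the grid, where fewer options are available, I would invoke Lemma \ref{switch} to combine partial forcing processes --- running the standard cascade up to the point before $v$ would be forced, then switching to an alternative process that forces $v$ from a different direction. The case analysis decomposes the grid into regions relative to the staircase, and I expect the dominant difficulty to lie with vertices near the corners and far boundaries, which require custom-built alternative chains (for instance, $(1, n)$ must be shown to be forceable both by $(1, n-1)$ via the row-$1$ cascade and by $(2, n)$ via a process that sweeps column $n$ upward before reaching $(1,n)$).
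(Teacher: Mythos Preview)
The paper does not supply its own proof of this proposition; it is quoted directly from \cite{DK} and used as a black box in the proof of Theorem~\ref{thm:pnpm}. Hence there is no in-paper argument to compare yours against.

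Your plan is reasonable and, if completed, would give an argument genuinely different from anything in this paper: you would be using the paper's own Theorem~\ref{double} (the two-forcer characterisation of $1$-leaky forcing sets) to re-derive a result the paper simply imports. The lower bound is immediate from Proposition~\ref{ineq} and $\Z\sub 0(P_n\square P_n)=n$, and your staircase $B$ is a sound candidate for the upper bound --- one can check by hand that it is a $1$-leaky forcing set for $n\leq 4$.

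That said, the proposal is still only a sketch, and your informal description of the forcing dynamics is inaccurate in a way that undermines the argument for~(ii). You write that $(1,1)$ ``triggers a downward cascade along column~$1$'' and that ``once row~$1$ and column~$1$ are fully blue, the remainder of the grid fills in'', but for $n\geq 4$ column~$1$ cannot fill on its own: after $(1,1)\to(2,1)\to(3,1)$ the vertex $(3,1)$ has two white neighbours $(3,2)$ and $(4,1)$, so the cascade stalls until $(3,2)$ is forced from elsewhere. The actual propagation is an interleaved diagonal wave, not two independent row/column sweeps. This matters because your proposed verification of~(ii) --- ``one forcer from the horizontal cascade along $v$'s row, one from the vertical cascade along $v$'s column'' --- presupposes the independent-sweep picture and does not directly survive the correction. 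To finish the argument you would need to exhibit two explicit forcing processes of $B$ (for instance, one that first exits the staircase at $(1,1)$ and winds clockwise around the boundary layers $\mathcal{C}_y$, and another that first exits at the far end of the staircase and winds counter-clockwise) and verify that they assign distinct predecessors to every white vertex. This is feasible and is in the spirit of the paper's proof of Theorem~\ref{thm:pnpm}, but it is the real content of the proof and is not yet present in your proposal.
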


The argument for Theorem \ref{thm:pnpm} describes a zero forcing set $B$ with $m$ vertices such that each white vertex can be forced in two different ways.  The next two definitions help describe a set of forcing chains of $B$.  Define the set of \emph{boundary} vertices of $P_n\square P_m$, denoted $\mathcal{B}_0$, as the set of vertices of degree three or less.  Define the set of \emph{$y$-interior} vertices of $P_n\square P_m$, denoted $\mathcal{B}_y$, as the set of vertices whose distance to a nearest boundary vertex is $y$ for $1\le y \le \left\lceil n/2\right\rceil -1$.  In addition, define $\mathcal{C}_y = (P_n\square P_m)[\mathcal{B}_y]$.  Note that $\mathcal{C}_y$ is an induced cycle except for the largest value of $y$. See Figure \ref{cy} for an example of a decomposition of $P_n\square P_m$ into $\mathcal C_y$. We say that a graph $H$ contains  a forcing chain $C$ if $V(C)\subseteq V(H)$ and $x\fs y\in C$ implies that $xy\in E(H).$  Similarly, we say a forcing chain $C$ contains a graph $H$ if $V(H)\subseteq V(C)$ and $xy\in E(H)$ implies $x\fs y\in C$. The purpose of this language is to describe the relative layout of forcing chains in a graph. 

\begin{figure}[H]
    \centering{
  \includegraphics[width=0.75\textwidth]{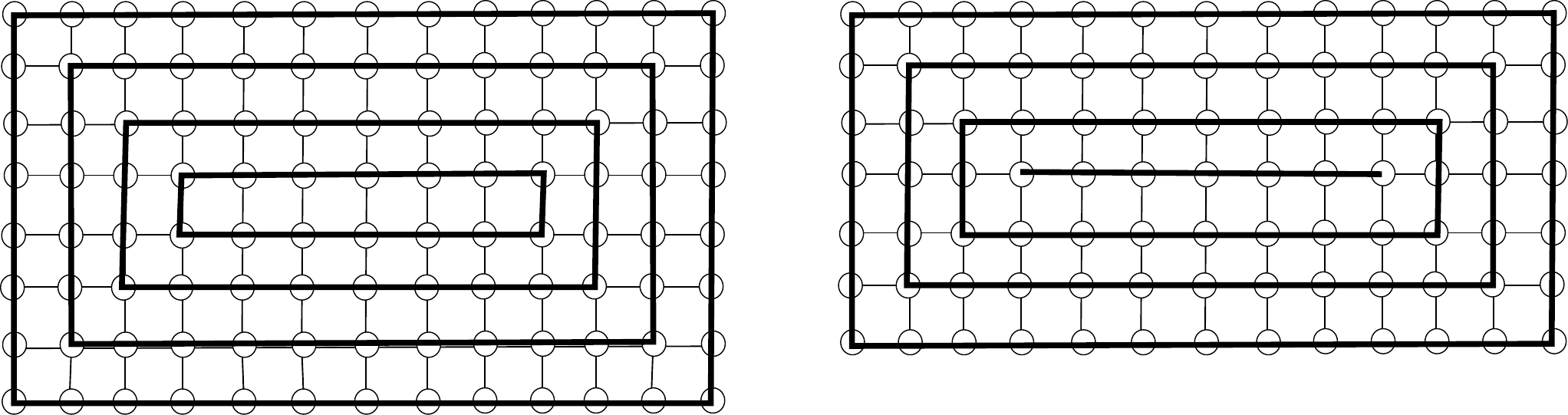}
  \caption{Decompositions of grid graphs into $\mathcal C_y$ when $n$ is even (left image) and when $n$ is odd (right image).}
    \label{cy}}
\end{figure}

\begin{thm}\label{thm:pnpm}
If $1 \le n \le m$, then $\Z\sub 1(P_n\square P_m) \le \min\{2n,m\}$.
\end{thm}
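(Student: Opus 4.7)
The plan is to split on whether $m \ge 2n$ or $m < 2n$. If $m \ge 2n$, then $\min\{2n,m\} = 2n$ and the desired bound is immediate from Proposition \ref{prop:DK4.12}. If $m < 2n$, then $\min\{2n,m\} = m$ and we must produce a $1$-leaky forcing set of size $m$. The endpoint $n=m$ is handled by Proposition \ref{prop:DK4.18}, and the range $n \le \lfloor m/2 \rfloor + 2$ is handled by Theorem \ref{thm:DK4.20}. This leaves the band $\lfloor m/2 \rfloor + 2 < n < m < 2n$ as the only substantive case, and the work is to exhibit an explicit $B \subseteq V(P_n \square P_m)$ with $|B|=m$ and then verify it is a $1$-leaky forcing set via Theorem \ref{double}.

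For that construction I would generalize the blue set used in Theorem \ref{thm:DK4.20} and place $B$ along the boundary $\mathcal{B}_0$ in a pattern that respects the concentric layer decomposition $\mathcal{C}_0, \mathcal{C}_1, \dots, \mathcal{C}_{\lceil n/2\rceil - 1}$ introduced just above. Because each $\mathcal{C}_y$ (with $y < \lceil n/2 \rceil - 1$) is a cycle, a pair of blue vertices entering $\mathcal{C}_y$ on opposite arcs can launch two forcing chains that together sweep the whole layer in opposite directions, so every vertex of $\mathcal{C}_y$ acquires two distinct potential forcers in $\calf(B)$. The plan is therefore to choose $B$ so that each layer $\mathcal{C}_y$ is ``entered'' from two well-separated sides: the outer layers receive their entries directly from $B$, while the inner layers inherit entries inductively from forces that have already propagated along $\mathcal{C}_{y-1}$ and then jump inward.

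Once the construction is fixed, the verification amounts to showing that for every $v \in V(P_n \square P_m) \setminus B$ there exist $x \fs v, y \fs v \in \calf(B)$ with $x \neq y$, and then invoking Theorem \ref{double} to conclude $B$ is a $1$-leaky forcing set. The main obstacle is producing $B$ within the tight budget of exactly $m$ vertices while still guaranteeing two separated entry points for every layer; the innermost layers, where $\mathcal{C}_y$ is short (or is a path rather than a cycle), are the most delicate since the two ``opposite'' entry points collapse together. I expect the cleanest way to handle this is a case analysis on the parities of $n$ and $m$, together with a direct check that the few vertices lying on the innermost layer are forced in two ways using chains that have arrived from $\mathcal{C}_{y-1}$ from both directions.
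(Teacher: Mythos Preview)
Your high-level strategy --- reduce via Propositions~\ref{prop:DK4.12} and~\ref{prop:DK4.18} and Theorem~\ref{thm:DK4.20}, then in the remaining range exhibit a set $B$ of size $m$ and certify it with Theorem~\ref{double} using the concentric layers $\mathcal{C}_y$ --- is exactly the paper's. The divergence is in the construction of $B$. You propose to place $B$ entirely on the boundary $\mathcal{B}_0$ and let the inner layers be reached ``inductively'' from the outside in. The paper instead takes $B$ to be an L-shaped set: two adjacent column segments dropping from the top boundary down to roughly the middle row, together with a horizontal segment along that middle row. The point of this shape is that it meets every layer $\mathcal{C}_y$ directly; each $\mathcal{C}_y$ already contains two adjacent vertices of $B$ (one from each column), so the clockwise and counterclockwise sweeps of $\mathcal{C}_y$ start immediately from $B$, and the two-forcer condition of Theorem~\ref{double} is verified layer by layer with essentially no interaction between layers.

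Your boundary-only plan does not have this feature, and that is the gap. With $B\subseteq\mathcal{B}_0$, the inner layers must be entered by forces coming from the layer outside, but a vertex on $\mathcal{C}_{y-1}$ can force inward into $\mathcal{C}_y$ only after both of its $\mathcal{C}_{y-1}$-neighbours are already blue; tracking two well-separated entry points through this cascade, and showing they never collapse before reaching the innermost layer, is precisely the delicate part, and you have not carried it out. (Concretely: if $B$ is a contiguous arc of $\mathcal{B}_0$, its endpoints cannot even begin forcing along $\mathcal{B}_0$ until their $\mathcal{C}_1$-neighbours are blue, so the picture of ``two chains sweeping $\mathcal{C}_0$ in opposite directions'' does not get off the ground.) The paper's L-shape is exactly the device that removes this difficulty by planting a seed in every layer at once; your sketch is missing an analogue of that idea.
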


\begin{proof} If $n=m$, then $\Z\sub 1 (P_n\square P_m) = n$, by Proposition \ref{prop:DK4.18}, so it will be assumed that $n < m$.  If $n\in \{1,2,3,4,5\}$, Theorem \ref{thm:DK4.20} and Proposition \ref{prop:DK4.12} give the desired result.  This allows the focus of the remainder of the proof to be the cases when $6\le n < m$.  


Let $x$ be the smallest integer such that $n \le 2x+3 \le m$.  Since $n+1 \le m$, such an integer will always exist.  Define $k = \left\lfloor \dfrac{m-2x}{2} \right\rfloor$, and $j = m - 2x - k$.  Note that $j\ge 1$  and $k\ge 2$ by construction.  Let \[B_1 = \{(i,x+k),(x+1,x+p) \, : \, 1\le i \le x, \,1\le p \le k\},\]  \[B_2 = \{(i,x+k+1),(x+1,x+k+p)\, : \, 1\le i \le x, 1\le p \le j \},\] and $B = B_1 \cup B_2$. Note that $|B| = m$.

\begin{figure}[H]
    \centering{
  \includegraphics[width=0.85\textwidth]{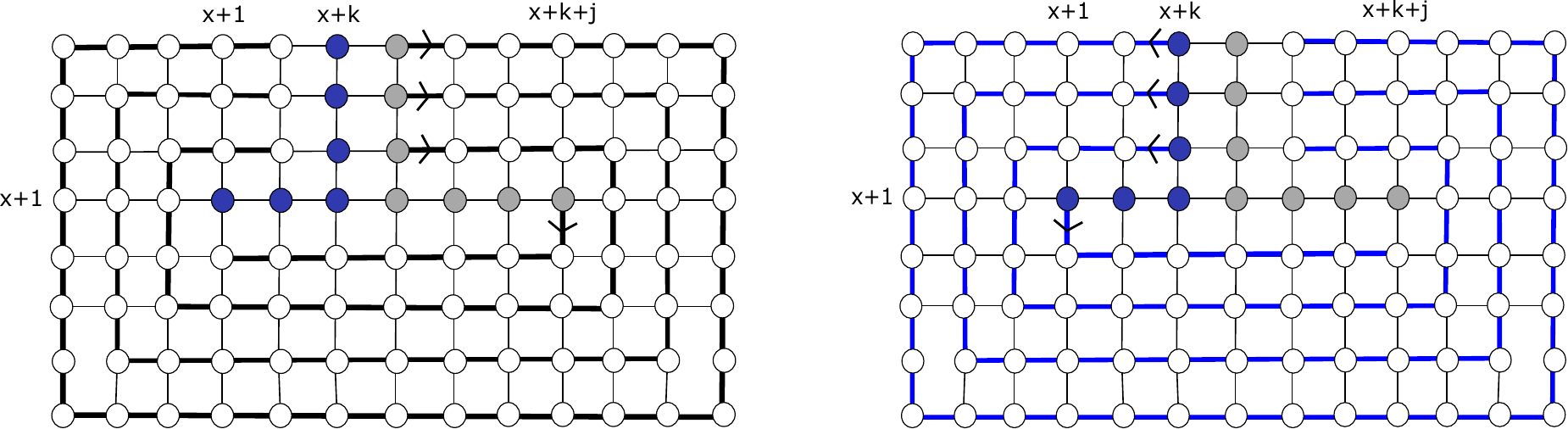}
  \caption{Forcing chains originating from $B_2$ (grey vertices, left image) and from $B_1$ (dark vertices, right image) in $P_8\square P_{13}$.}
    \label{gridpattern}}
\end{figure}

The argument will show that $B_1$ or $B_2$ can each force the entire graph without vertices in $B_2$ or $B_1$, respectively, performing a force.  More importantly, the description of the forcing chains shows that every vertex in $V(P_n\square P_m)\setminus B$ can be forced in two different ways. In particular, we will find two disjoint forcing processes given in Figure \ref{gridpattern}.

\textbf{Case 1:} Assume that $n$ is even.
If vertices in $B_1$ do not perform forces, then there is a forcing process that creates forcing chains originating with vertices in $B_2$. These forcing chains can be found in $\mathcal{C}_y$.  More particularly, the forcing chain in $\mathcal{C}_{i-1}$ includes $(i,x+k+1), (i,x+k+2)$ and $(i,x+k-1)$ and is oriented in a clockwise direction (see Figure ??) for $1\le i \le x$, and the final forcing chain in $\mathcal{C}_{n/2 - 1}$ includes $(x+1,x+k+j), (x+2,x+k+j)$, and $(x+2,x+1)$.  If  vertices in $B_2$ do not perform forces, then the forcing chains originating with vertices in $B_1$ include $(i,x+k)$, $(i,x+k-1)$, and $(i,x+k+2)$ and are oriented in a counter-clockwise direction for $1\le i \le x$, and the final forcing chain includes $(x+1,x+1), (x+2,x+1)$ and $(x+2,x+j+k)$.

\textbf{Case 2:} Assume that $n$ is odd.
Define $p = \left\lceil n/2 \right\rceil$.  If  vertices in $B_1$ or $B_2$ do not perform forces, then the description of the forcing chains that reside in $\mathcal{C}_y$ for $0 \le y \le p -3$ are the same as the first case.  If vertices in $B_1$ do not perform forces, then the forcing chain originating from vertices in $B_2$ that is in $\mathcal{C}_{p-2}$ includes $(x+1,x+k+j), (x+3,x+k+j)$, $(x+3,x+1)$, and $(x+2,x+1)$ and is oriented in a clockwise direction.  The final forcing chain is not in $\mathcal{C}_{p-1}$, but contains $\mathcal{C}_{p-1}$.  In particular, the final forcing chain starts at $(x+1,x+k+j-1)$, which is in $B_1$ since $j\ge 2$ when $n$ is odd. The forcing chain resumes so that it includes $(x+2,x+k+j-1)$ and continues to $(x+2,x+2)$.  If vertices in $B_2$ do not perform forces, then the forcing chains we just described are augmented and reversed as in case 1.

The two cases establish that for each $v\in V(P_n\square P_m)\setminus B$, there exists $x\to v$,$y\to v \in\mathcal{F}(B)$ with $y\neq x$.  Thus, Theorem \ref{double} gives that $B$ is a $1$-leaky forcing set and $\Z\sub1(P_n\square P_m) \le m$.  Combining this with Proposition \ref{prop:DK4.12} cited earlier from \cite{DK} establishes that $\Z\sub1(P_n\square P_m) \le \min\{2n,m\}$. \end{proof}

As remarked in \cite{DK}, it is believed that $\Z\sub1(P_n\square P_m) = \min\{2n,m\}$.  Theorem \ref{thm:pnpmequality} gives some evidence that this is true.

\begin{thm}\label{thm:pnpmequality}
If $G=P_n\square P_m$ where $m\geq 2n^2$, then $\Z\sub 1 (G)=2n$.
\end{thm}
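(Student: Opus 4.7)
By Proposition \ref{prop:DK4.12}, $\Z\sub 1(G) \le 2n$, so it remains to prove the lower bound $\Z\sub 1(G) \ge 2n$. I would proceed by contradiction: suppose $B$ is a $1$-leaky forcing set with $|B| \le 2n-1$. The plan is to use a pigeonhole argument to locate a large all-white block of columns, split $B$ by this block, and exploit Theorem \ref{double} together with the structure of forcing across a wide white strip to derive a contradiction.

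For the pigeonhole step, the $|B| \le 2n-1$ vertices occupy at most $2n-1$ distinct columns out of $m \ge 2n^2$, so at least $m - (2n-1) \ge 2n^2 - 2n + 1$ columns contain no vertex of $B$. These all-white columns partition into at most $2n$ maximal consecutive runs, so at least one run has length at least $\lceil (2n^2 - 2n + 1)/(2n) \rceil = n$. Fix such a run $I = \{a, a+1, \ldots, a+n-1\}$ of $n$ consecutive all-white columns, and write $B_L = B \cap \{(i,j) : j < a\}$ and $B_R = B \cap \{(i,j) : j > a+n-1\}$, so that $B_L \cup B_R = B$ and, without loss of generality, $|B_L| \le n-1$.

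The heart of the proof is the structural claim that $|B_L| \ge n$ (and symmetrically $|B_R| \ge n$), which contradicts $|B_L| + |B_R| \le 2n-1$. To establish this I would invoke Theorem \ref{double}: for each vertex $(i,a)$ in the leftmost column of $I$, one of its two forcers must be $(i, a-1) \in V(G_L)$ (the unique initially viable option, since the remaining neighbors of $(i,a)$ all lie in $I$ and are initially white), while the second forcer must lie in $\{(i\pm 1, a), (i, a+1)\} \subseteq I$. Hence every ``alternative'' force into column $a$ must be sustained by a chain of forces inside $I$ that enters either from a distinct column-$(a-1)$ vertex or from column $a+n$. Tracing these alternative chains backward and applying Theorem \ref{setofforces} with a single leak placed strategically at a vertex of column $a+n$ (or column $a-1$) to disrupt the critical cross-strip force, one decouples the left and right contributions and concludes that $B_L$ alone must zero force an appropriate subgraph containing $G_L$, yielding $|B_L| \ge n$ from the standard zero forcing numbers of grids.

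The principal obstacle is the rigorous execution of this decoupling: although $I$ delays forces from $B_R$ reaching $G_L$, it does not block them, so isolating the two sides requires a careful choice of where and when to place the single available leak so that the alternative forcer chain for some vertex of column $a$ is forced to originate on one specific side. In addition, boundary cases where $I$ abuts the left or right edge of $G$ (so that $G_L$ or $G_R$ is too narrow for the $\Z = n$ bound to apply directly) must be handled separately; there, the entire forcing burden falls on the non-empty side, and a separate argument using Theorem \ref{double} together with the width of $I$ shows that this side alone must already contain at least $2n$ vertices in order to supply two independent forcers for every vertex of $I$, completing the contradiction.
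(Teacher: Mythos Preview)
Your framework matches the paper's: pigeonhole to find an all-white block of $n$ consecutive columns, split $B$ into left and right pieces, and assume without loss of generality that the left piece has at most $n-1$ vertices. But the core of your argument has two genuine gaps.

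First, your use of Theorem \ref{double} is flawed. You assert that for $(i,a)$ one of its two forcers in $\calf(B)$ must be $(i,a-1)$, calling it ``the unique initially viable option.'' Theorem \ref{double} says nothing about initial viability; a force $x\to v\in\calf(B)$ can occur arbitrarily late in some process, after $x$ has been colored by a chain originating anywhere in $B$. Both forcers of $(i,a)$ could perfectly well lie inside $I$, fed by chains from $B_R$. So the step that launches your decoupling does not stand.

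Second, and more fundamentally, the goal ``prove $|B_L|\ge n$'' is not what the paper does, and your own description of how to achieve it (``place a leak strategically \ldots\ decouple the left and right contributions'') is, as you concede, the unresolved obstacle. One leak cannot sever all communication across an $n\times n$ strip, so no clean decoupling is available. The paper instead argues as follows. From $|B_L|\le n-1$ it deduces that $B_L$ cannot zero force the left block $S$, hence some vertex $(i,(k-1)n)$ in the rightmost column of $S$ is either white or has a white neighbor in $S$ after $B_L$ stalls; this vertex behaves like a built-in leak on the left interface. The single real leak is then placed at the \emph{reflected} row on the right interface, namely $(n+1-i,\,kn+1)$. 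The point of this specific choice is a light-cone argument: the blue region $B'_S$ that $B_L$ can push into $M$ lies in the ``shadow'' of the stalled vertex $(i,(k-1)n)$, and the blue region $B'_R$ that $B_R$ can push into $M$ (given the leak) lies in the shadow of $(n+1-i,kn+1)$. A short triangle-inequality computation, using $|i-j|+|n+1-i-j|\le n$ and the width $n$ of $M$, shows that no vertex of $B'_S$ is adjacent to any vertex of $B'_R$. Hence the two sides never link up and $B$ fails against this leak. The reflected-row leak placement and the ensuing shadow geometry are the missing ideas in your proposal.
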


\begin{proof}
Recall that the columns of $G$ are indexed by $[m]$ and the rows of $G$ are indexed by $[n].$ Let $G[i,j]$ be the subgraph of $G$ induced by the vertices in columns $i$ through $j$.

Assume that  $B\subseteq V(G)$ is a  set of blue vertices such that $|B|\leq 2n-1$. By the pigeon hole principle, there exists $k$ such that $M=G[(k-1)n+1,kn]$ does not contain a blue vertex. Notice that if $k=1$ or $kn=m$, then setting $(1,n+1)$ or $(1,m-n)$ as a leak, respectively, shows that $B$ is not a $1$-leaky forcing set. Therefore, we will assume that $k\neq 1$ and $kn\neq m$.

Let $S= G[1,(k-1)n]$ and $R=G[kn+1,m]$. Without loss of generality, assume that $|V(S)\cap B|\leq n-1$. Consequently, $|V(R)\cap B|\geq n.$ Notice that $V(S)\cap B$ is not a zero forcing set of $S$, since $\Z\sub 0(S)=n$.  Exhaustively apply the zero forcing rule until the only remaining valid forces are $x\fs y$ with $y\in M$. Notice that at this point in the process $G[(k-1)n-1,(k-1)n]$ must contain a white vertex, otherwise, $V(S)\cap B$ could force all of $S$. This implies that there exists a vertex  $(i,(k-1)n)$ that cannot perform a force either because it is white or because it has a white neighbor in $G[(k-1)n-1,(k-1)n]$. Since $(i,(k-1)n)$ has a white neighbor that $V(S)\cap B$ cannot force, $(i,(k-1)n)$ essentially acts like a leak (it will never perform a force without help from a forcing chain originating in $V(R)\cap B$). Without loss of generality, assume that $i\leq n/2$. Therefore, $V(S)\cap B$ cannot color all of $M$ blue. Furthermore, notice that any leak in column $kn+1$ shows that $V(R)\cap B$ cannot color $M$ blue. In particular, assume that $(n+1-i,kn+1)$ is a leak. 

Let $B'_S$ be the set of blue vertices in $M$ obtained by allowing $V(S)\cap B$ to force until failure. Similarly, let $B'_R$ be the set of blue vertices in $M$ obtained by allowing $V(R)\cap B$ to  force until failure given a leak at $(n+1-i, kn+1).$ Recall that neither $V(S)\cap B$ nor $V(R)\cap B$ can color all of the vertices in $M$ blue. Therefore, $B$ is a $1$-leaky forcing set only if there exists $u\in B'_S$ and $v\in B'_R$ such that $u$ is adjacent to $v$ in $G$. 

\begin{figure}[H]
    \centering{
  \includegraphics[width=0.55\textwidth]{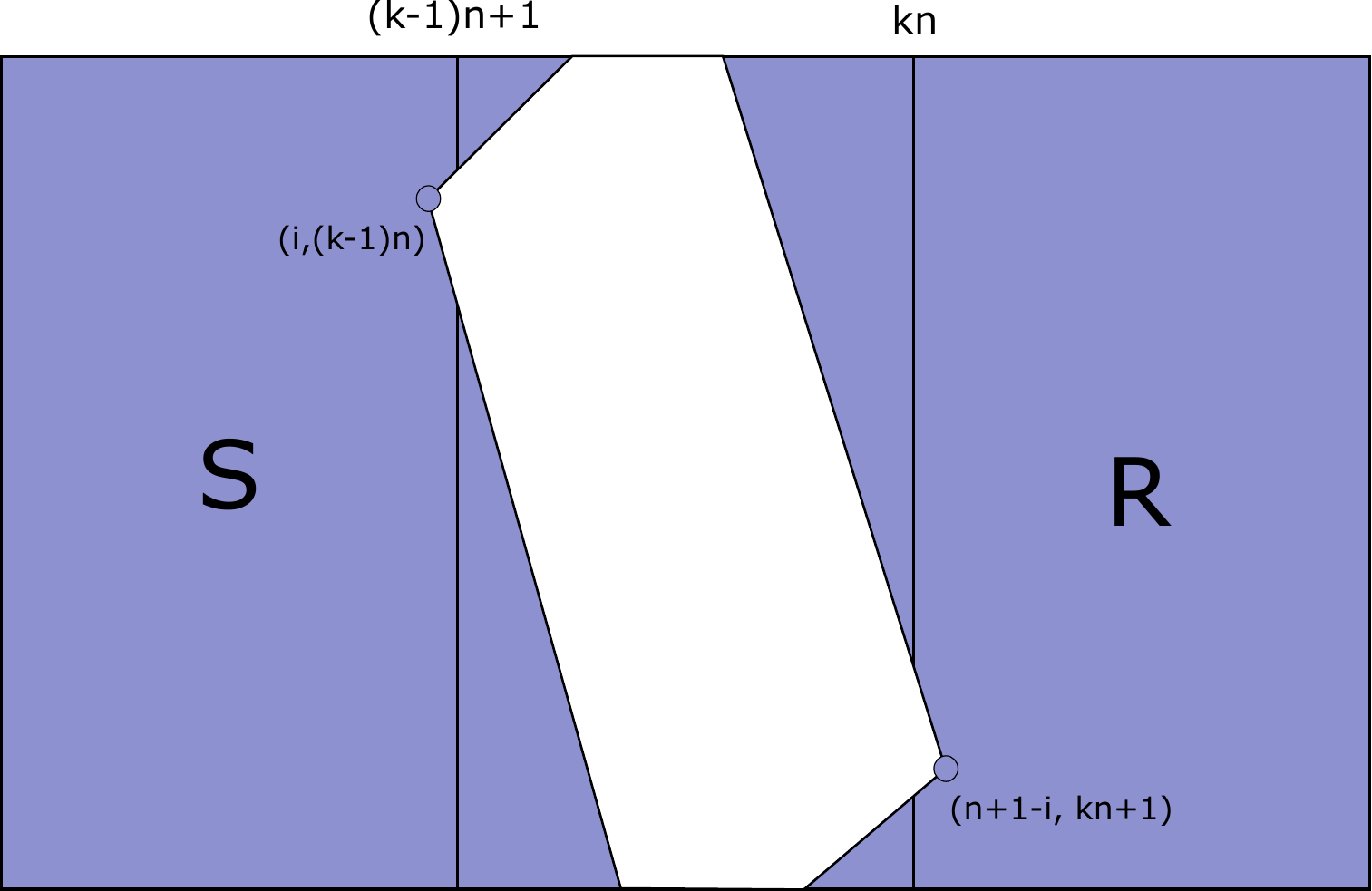}
  \caption{Relative layout of blue vertices and white vertices (not to scale) after forces have been exhaustively applied in the proof of Theorem \ref{thm:pnpmequality}. }
    \label{shadow}}
\end{figure}

Suppose that $u\in B'_S$ and $v\in B'_R$. There are three cases: (1) $u$ and $v$ are in the same row, (2) $u$ and $v$ are in the same column, or (3) $u$ and $v$ are not adjacent. We will show that (1) or (2) implies (3). For some intuition, consult Figure \ref{shadow}.

\textbf{Case 1:} Suppose that $u$ and $v$ are in row $j$. In particular, let $u=(j,u')$ and $v=(j,v')$. Since $(i, (k-1)n)$ cannot perform any forces given $V(S)\cap B$, we know that $u'< (k-1)n+|i-j|$. Similarly, the leak at $(n+1-i, kn+1)$ implies that $kn-|n+1-i-j|<v'$. 

However, 
\[|n+1-i-j|+|i-j|\leq n\] 
implies that 
\[(k-1)n+|i-j|\leq kn-|n+1-i-j|.\]
By applying the bounds on $u'$ and $v'$, we get 
\[u'<(k-1)n+|i-j|\leq kn-|n+1-i-j|<v'.\]
Therefore, $u$ and $v$ are not adjacent. 

\textbf{Case 2:} Suppose that $u$ and $v$ are in column $j$. In particular, let $u=(u',j)$ and $v=(v',j)$. Since $(i, (k-1)n)$ cannot perform any forces given $V(S)\cap B$, we know that \[u'\in [n]\setminus [i-j+1, i+j-1].\] Since $u'$ exists, we know that $j\leq n-i$. Similarly, the leak at $(n+1-i, kn+1)$ implies that \[v'\in [n]\setminus [n+1-i-(n-j),n+1-i+n-j].\] Since $v'$ exists, we know that $j\geq i+1$. Therefore, $u'\in [n]\setminus [1,i+j-1]=[i+j,n]$ and $v'\in [n]\setminus [1-i+j,n]=[1,j-i].$ Notice that $j-i$ and $j+i$ differ by at least $2$. Therefore, $u$ and $v$ are not adjacent.

Since there does not exist $u\in B'_S$ that is adjacent to $v\in B'_R$, $B$ is not a $1$-leaky forcing set of $G$. Thus, $2n\leq \Z\sub1(G)$. The corresponding upper bound is obtained by Theorem \ref{thm:pnpm}.
\end{proof}

\section{Closing remarks}

One proof technique we tried to develop is extending minimum zero forcing sets to minimum $1$-leaky forcing sets. However, this is not always possible. Consider $P_4\square P_5$  and let $B$ be a side of length $4$. This set is a minimum zero forcing set for this graph. The $1$-leaky forcing number for this graph is $5$, but there is no way to add one blue vertex to $B$ to get a minimum $1$-leaky forcing set. Therefore, there is a graph with a minimum zero forcing set that cannot be extended to a minimum $1$-leaky forcing set.

 \begin{figure}[H]
        \centering{
        \includegraphics[width=0.3\textwidth]{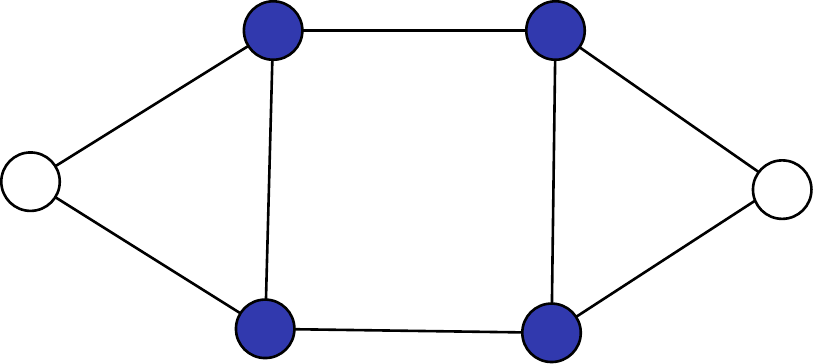}
        \caption{Graph $G$ with a minimum $1$-leaky forcing set that does not contain a minimum zero forcing set.}
        \label{fig:nominimum}}
    \end{figure}

Similarly, minimum $1$-leaky forcing sets cannot always be reduced to a minimum zero forcing set. Consider the graph $G$ depicted  in Figure \ref{fig:nominimum}. The zero forcing number for $G$ is two, and the blue set shown above is a minimum $1$-leaky forcing set. One can check that removing any two vertices from this blue set will not result in a minimum zero forcing set. Therefore, there exists a graph with a minimum $1$-leaky forcing set that does not contain a minimum zero forcing set. 

Notice that if the top row of vertices are colored blue in $G$ in Figure \ref{fig:nominimum}, there is a way to remove two blue vertices to yield a minimum zero forcing set. With these examples in mind, the following conjecture is proposed.

\begin{prob}
For every graph $G$, does there exists a minimum $\ell$-leaky forcing set $B$ such that $B$ contains a minimum zero forcing set?
\end{prob}

\section*{Acknowledgements}
This material is based upon work supported by the National Science Foundation under Grant Numbers DMS-1839918 and DMS-1719841.

\end{document}